\newtheorem{thm}{Theorem}[section]
\newtheorem*{PROPA}{Proposition A}
\newtheorem*{PROPB}{Proposition B}
\newtheorem*{PROPC}{Proposition C}
\newtheorem*{THM}{Main Theorem}
\newtheorem*{CORA}{Corollary A}
\newtheorem*{CORB}{Corollary B}
\newtheorem{lemma}[thm]{Lemma}
\newtheorem{cor}[thm]{Corollary}
\newtheorem{claim}{Claim}[thm]
\newtheorem{prop}[thm]{Proposition}
\theoremstyle{definition}
\newtheorem{defn}[thm]{Definition}
\theoremstyle{remark}
\newtheorem*{remarks}{Remarks}
\newcommand\name[1]{\dot{#1}}
\newcommand\forces{\Vdash}
\newcommand\s{\subseteq}
\newcommand\sq{\sqsubseteq}
\newcommand{\sqleft}[1]{\mathrel{_{#1}{\sqsubseteq}}}
\newcommand{\sqx}{\sqleft{\chi}}
\newcommand\br{\blacktriangleright}
\renewcommand{\restriction}{\mathbin\upharpoonright}    
\DeclareMathOperator{\suc}{succ}
\DeclareMathOperator{\height}{ht}   
\DeclareMathOperator{\cf}{cf}
\DeclareMathOperator{\dom}{dom}
\DeclareMathOperator{\rng}{Im}
\DeclareMathOperator{\otp}{otp}
\DeclareMathOperator{\acc}{acc}
\DeclareMathOperator{\nacc}{nacc}
\DeclareMathOperator{\col}{Col}
\DeclareMathOperator{\p}{P}
\newcommand\axiomfont[1]{\textsf{\textup{#1}}}
\newcommand\zfc{\axiomfont{ZFC}}
\newcommand\gch{\axiomfont{GCH}}
\newcommand\ch{\textup{CH}}
\newcommand\ns{\textup{NS}}
\subjclass[2010]{Primary 03E05; Secondary 03E35, 05C05}
\keywords{Souslin-tree construction, microscopic approach, Prikry forcing, Magidor forcing, Radin forcing,
parameterized proxy principle, square principle, outside guessing of clubs.}
\begin{document}
\begin{abstract} An $\aleph_1$-Souslin tree is a complicated combinatorial object whose existence cannot be decided on the grounds of \zfc\ alone.
But 15 years after Tennenbaum and independently Jech devised notions of forcing for introducing such a tree,
Shelah proved that already the simplest forcing notion --- Cohen forcing --- adds an $\aleph_1$-Souslin tree.

In this paper, we identify a rather large class of notions of forcing that, assuming a $\gch$-type assumption, add a $\lambda^+$-Souslin tree.
This class includes Prikry, Magidor and Radin forcing.
\end{abstract}

\author{Ari Meir Brodsky}
\author{Assaf Rinot}
\address{Department of Mathematics, Bar-Ilan University, Ramat-Gan 5290002, Israel.}
\urladdr{\url{http://u.math.biu.ac.il/~brodska/}}
\urladdr{\url{http://www.assafrinot.com}}
\thanks{This research was supported by the Israel Science Foundation (grant $\#$1630/14).}

\title{More notions of forcing add a Souslin tree}

\maketitle

\section{Introduction}
The definition of a $\kappa$-Souslin tree may be found in Section~2.
Our starting point is a theorem of Jensen from his masterpiece \cite{MR309729}.
Let $\kappa$ denote a regular uncountable cardinal, and let $E$ denote a stationary subset of $\kappa$.
The principle from \cite[p.~287]{MR309729}, which we denote here by $\square(E)$, asserts the existence of a sequence $\langle C_\alpha\mid\alpha<\kappa\rangle$
such that for every limit ordinal $\alpha<\kappa$:
\begin{itemize}
\item $C_\alpha$ is a club in $\alpha$;
\item if $\bar\alpha$ is an accumulation point of $C_\alpha$, then $C_\alpha\cap\bar\alpha=C_{\bar\alpha}$ and $\bar\alpha\notin E$.\footnote{Note that $\square(E)$ for $E=\kappa=\aleph_1$ is a trivial consequence of $\zfc$,
and that $\square(E)$ for $\sup(E)=\kappa>\aleph_1$ implies that $E\neq\kappa$.
Therefore, our choice of notation does not conflict with the principle $\square(\kappa)$ from \cite[p.~267]{MR908147}.}
\end{itemize}
The principle from \cite[p.~293]{MR309729}, commonly denoted $\diamondsuit(E)$, asserts the existence of a sequence $\langle Z_\alpha\mid\alpha<\kappa\rangle$ such that for every subset $Z\s\kappa$,
there exist stationarily many $\alpha\in E$ such that $Z\cap\alpha=Z_\alpha$.
Jensen's theorem reads as follows:
\begin{thm}[Jensen, {\cite[Theorem~6.2]{MR309729}}] If $E$ is a stationary subset of a given regular uncountable cardinal $\kappa$,
and $\square(E)+\diamondsuit(E)$ holds, then there exists a $\kappa$-Souslin tree.
\end{thm}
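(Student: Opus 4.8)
The plan is to construct, by recursion on the levels $\alpha<\kappa$, a normal splitting tree $T$ whose underlying set is $\kappa$, using $\diamondsuit(E)$ to seal maximal antichains and $\square(E)$ to thread cofinal branches coherently through the limit levels. Throughout I would maintain the invariants that each level $T_\alpha$ has size $<\kappa$, that $T\restriction\alpha$ (the set of nodes of height $<\alpha$) is normal, in the sense that every node has at least two immediate successors and extensions to every higher level already constructed, and that for a club of $\alpha$ the underlying set of $T\restriction\alpha$ is exactly $\alpha$; the last invariant lets the $\diamondsuit(E)$-sequence $\langle Z_\alpha\mid\alpha<\kappa\rangle$ be read, at such $\alpha$, as a guess for a subset $A_\alpha\s T\restriction\alpha$. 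The final tree then has height $\kappa$ with small levels, so it is a $\kappa$-tree; since it will be splitting, any cofinal chain would yield an antichain of size $\kappa$, and so the whole difficulty reduces to ruling out antichains of size $\kappa$.

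First I would handle the generic steps. At a successor level I simply give every node two immediate successors, which preserves normality and at most doubles the level, keeping it small. At a limit level $\alpha\notin E$ the issue is to extend the tree while adding only few new nodes: for each node $x\in T\restriction\alpha$ I would single out one canonical cofinal branch $b^\alpha_x$ through $x$ in $T\restriction\alpha$ and place its union into $T_\alpha$. The branch $b^\alpha_x$ is built by recursion along the ladder $C_\alpha$: between consecutive points of $C_\alpha$ I extend along a fixed choice provided by normality, and at a point $\bar\alpha\in\acc(C_\alpha)$ I invoke the coherence clause $C_\alpha\cap\bar\alpha=C_{\bar\alpha}$ to identify the branch already threaded at stage $\bar\alpha$, so that $b^\alpha_x$ passes through the node placed in $T_{\bar\alpha}$ and the recursion continues without conflict. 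Since the branches are indexed by nodes of $T\restriction\alpha$, this adds at most $|T\restriction\alpha|<\kappa$ nodes, so $|T_\alpha|<\kappa$ as required, and normality is preserved because every node of $T\restriction\alpha$ lies on some $b^\alpha_x$.

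The heart of the argument is the treatment of $\alpha\in E$, where I also seal antichains. Reading $Z_\alpha$ as a subset $A_\alpha\s T\restriction\alpha$, if $A_\alpha$ happens to be a maximal antichain of $T\restriction\alpha$, then before placing nodes into $T_\alpha$ I reroute each cofinal branch so that it meets $A_\alpha$ --- possible precisely because maximality makes every node of $T\restriction\alpha$ comparable to a member of $A_\alpha$. Consequently every node of height $\ge\alpha$ sits above some element of $A_\alpha$, so $A_\alpha$ stays a maximal antichain of the whole tree $T$. To see that $T$ is Souslin, suppose $A$ were a maximal antichain of size $\kappa$. A standard closure argument produces a club of $\alpha$ for which $A\cap(T\restriction\alpha)$ is already a maximal antichain of $T\restriction\alpha$ whose code is $Z_\alpha$; by $\diamondsuit(E)$ there are stationarily many such $\alpha$ in $E$. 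At any one of them the sealing step has rendered $A\cap(T\restriction\alpha)$ maximal in all of $T$, forcing $A=A\cap(T\restriction\alpha)$, which lives below level $\alpha$ and is therefore of size $<\kappa$ --- a contradiction. Hence $T$ is a $\kappa$-Souslin tree.

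The main obstacle --- and the reason the extra clause $\bar\alpha\notin E$ in $\square(E)$ is indispensable --- is the tension between coherent threading and sealing. The canonical branch $b^\beta_x$ built at a later limit $\beta$ must, at each $\bar\alpha\in\acc(C_\beta)$, pass through the node that was placed in $T_{\bar\alpha}$; if $\bar\alpha$ had belonged to $E$, that node might have been displaced by the rerouting done for sealing, and $b^\beta_x$ could fail to cohere with stage $\bar\alpha$, breaking the invariant that keeps levels small. The clause $\bar\alpha\notin E$ guarantees that accumulation points of ladders are never sealing levels, so any rerouting performed at an $E$-level occurs at a non-accumulation spot of every higher ladder and never has to be honored by a coherent branch. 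Verifying that $b^\alpha_x$ is genuinely total, reaching height $\alpha$ at every limit, and that the two roles of the ladder never collide, is the delicate bookkeeping I expect to occupy the bulk of the proof; once it is in place, the $\kappa$-tree and Souslin properties follow as above.
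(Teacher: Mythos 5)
Your outline is correct, and it is essentially Jensen's original argument; note, however, that the paper only \emph{cites} this theorem from Jensen's work and never proves it, so the right in-paper comparison is the proof of Proposition~\ref{prop24}, which derives a $\kappa$-Souslin tree from the weaker pair $\boxtimes^*(E^\kappa_{\ge\chi})+\diamondsuit(\kappa)$. The two constructions diverge exactly where you locate the heart of the matter. You seal a guessed maximal antichain at the top of an $E$-level $\alpha$: reroute each branch through an element of the guessed antichain and only then climb canonically along $C_\alpha$, and you correctly identify that this is legitimate only because $\square(E)$ supplies exact coherence $C_\alpha\cap\bar\alpha=C_{\bar\alpha}$ together with non-reflection $\acc(C_\alpha)\cap E=\emptyset$, so that a rerouted level is never an accumulation point of a later ladder. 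The paper's construction can afford neither crutch: a $\boxtimes^*$-sequence attaches \emph{many} clubs to each level, coheres only in the eventual sense $D\sqsubseteq^* C$ (indeed only $D\mathrel{\sqx^*}C$), and is consistent with every stationary subset of $\kappa$ reflecting. It therefore distributes the sealing microscopically: at \emph{every} successor step $\beta$ of every ladder $C$, the branch $b^C_x$ is extended through a member of the guessed set $\Omega_\beta$ whenever $Q^{C,\beta}_x\neq\emptyset$, and the Souslin verification uses Clause~(2) of $\boxtimes^*$ --- an $\alpha$ with $\sup(\nacc(C)\cap B)=\alpha$ for all $C\in\mathcal C_\alpha$ --- to find \emph{nonaccumulation} points $\beta$ of the relevant ladder at which $\Omega_\beta$ codes the antichain, so prediction happens along the ladder rather than at the level $\alpha$ itself; coherence of branch limits, no longer automatic, is then recovered by an inductive claim that two branch recursions starting from a common tail agree. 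Your route buys simplicity and is the right proof of the stated theorem; the paper's route buys the uniformity needed for the weaker hypothesis, which is precisely what survives the forcing scenarios that are the subject of the paper.
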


The goal of this paper is to identify various forcing scenarios that will introduce $\kappa$-Souslin trees.
We do so by studying several combinatorial principles that can be used (together with $\diamondsuit(\kappa)$) to construct $\kappa$-Souslin trees,
and establishing that some forcing scenarios already introduce these.
The simplest among the combinatorial principles under consideration is the following:

\begin{defn}\label{xbox}
For any regular uncountable cardinal $\kappa$, $\boxtimes^*(\kappa)$ asserts
the existence of a sequence $\langle \mathcal C_\alpha\mid\alpha<\kappa\rangle$ such that:
\begin{enumerate}
\item For every limit ordinal $\alpha<\kappa$:
\begin{itemize}
\item$\mathcal C_\alpha$ is a nonempty collection of club subsets of $\alpha$, with $\left| \mathcal C_\alpha \right| < \kappa$;
\item if $C \in \mathcal C_\alpha$ and $\bar\alpha$ is an accumulation point of $C$, then $C\cap\bar\alpha\in\mathcal C_{\bar\alpha}$;
\end{itemize}
\item For every cofinal $A\s\kappa$, there exist stationarily many $\alpha<\kappa$ such that $\sup(\nacc(C)\cap A)=\alpha$ for all $C \in \mathcal C_\alpha$.\footnote{Here, $\nacc(C)$ stands for the set of non-accumulation points of $C$.
See the \emph{Notation} subsection below.}
\end{enumerate}
\end{defn}

An evident difference between the principles $\square(E)$ and $\boxtimes^*(\kappa)$ is that the former assigns only a single club to each level $\alpha$,
whereas the latter assigns many (like in Jensen's \emph{weak square} principle). A more substantial difference is that the principle $\square(E)$ implies that the stationary set $E$ is non-reflecting,
whereas the principle $\boxtimes^*(\kappa)$ is consistent with the statement that all stationary subsets of $\kappa$ reflect (by \cite{chris}, or by a combination of the main results of \cite{MR3498375} and \cite{rinot24}).

Nevertheless, $\boxtimes^*(\kappa)$ is a nontrivial principle.
For instance, one can use the function $\rho_2$ from the theory of \emph{walks on ordinals} \cite{MR908147} to show that $\boxtimes^*(\kappa)$ entails the existence of a $\kappa$-Aronszajn tree.
More importantly, we have the following:

\begin{PROPA} If $\kappa$ is a regular uncountable cardinal,
and $\boxtimes^*(\kappa)+\diamondsuit(\kappa)$ holds, then there exists a $\kappa$-Souslin tree.
\end{PROPA}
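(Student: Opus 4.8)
The plan is to build the tree directly by recursion on its levels, in the streamlined style where the tree is realized as a downward-closed subtree $T=\bigcup_{\alpha<\kappa}T_\alpha$ of $({}^{<\kappa}\kappa,\s)$ ordered by end-extension, with $T_\alpha\s{}^\alpha\kappa$ denoting level $\alpha$. I would fix a witnessing sequence $\langle\mathcal C_\alpha\mid\alpha<\kappa\rangle$ for $\boxtimes^*(\kappa)$ and a witnessing sequence $\langle Z_\alpha\mid\alpha<\kappa\rangle$ for $\diamondsuit(\kappa)$, together with a fixed coding allowing each $Z_\alpha$ to be construed as a subset of $T\restriction\alpha$. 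At successor steps I would split every node, say by letting $t^\frown\langle i\rangle\in T_{\alpha+1}$ for each $t\in T_\alpha$ and each $i<2$; this makes $T$ everywhere splitting, which (by the standard argument producing a $\kappa$-sized antichain from a cofinal branch together with an off-branch splitting node at each level) will later reduce ``no $\kappa$-chain'' to ``no $\kappa$-antichain.'' The only delicate steps are the limit levels.

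At a limit level $\alpha$, I would use the family $\mathcal C_\alpha$ to decide which cofinal branches of $T\restriction\alpha$ receive a top node. For each $C\in\mathcal C_\alpha$ and each $x\in T\restriction\alpha$, I would thread a single branch $b^C_x$ through $T\restriction\alpha$ above $x$ by recursion along the increasing enumeration of $C$, and then place $\bigcup b^C_x$ into $T_\alpha$. The coherence clause of $\boxtimes^*(\kappa)$, namely $C\cap\bar\alpha\in\mathcal C_{\bar\alpha}$ whenever $\bar\alpha\in\acc(C)$, is exactly what makes this consistent across levels: the branch constructed along $C$ will, upon restriction to an accumulation point $\bar\alpha$ of $C$, agree with the branch already constructed at stage $\bar\alpha$ along $C\cap\bar\alpha=C_{\bar\alpha}$, so that at each such $\bar\alpha$ the partial branch is already a node of $T_{\bar\alpha}$ and the recursion can continue to length $\alpha$. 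Since every node is extended by some such branch, $T$ will be normal (hence of height $\kappa$), and since at most $|\mathcal C_\alpha|\cdot|T\restriction\alpha|<\kappa$ top nodes are added, the bound $|T_\alpha|<\kappa$ is preserved by induction, using the regularity of $\kappa$.

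To seal antichains I would, at the non-accumulation points of $C$, steer the branch $b^C_x$ so as to meet the set guessed by $\diamondsuit$. Concretely, whenever $Z_\alpha$ codes a maximal antichain $A$ of $T\restriction\alpha$, I would arrange that at each $\beta\in\nacc(C)$ the branch jumps, when possible, to a node extending some element of $A$, so that the resulting top node lies above a member of $A$. The verification that this succeeds for every genuine maximal antichain $\mathcal A$ of the final tree $T$ is where clause~(2) of $\boxtimes^*(\kappa)$ enters: letting $A\s\kappa$ be the cofinal set of heights at which the sealing opportunities for $\mathcal A$ occur, clause~(2) furnishes stationarily many $\alpha$ with $\sup(\nacc(C)\cap A)=\alpha$ for all $C\in\mathcal C_\alpha$; intersecting these with the stationarily many $\alpha$ at which $Z_\alpha$ correctly guesses $\mathcal A\cap(T\restriction\alpha)$, as provided by $\diamondsuit(\kappa)$, yields a level at which every newly added top node sits above an element of $\mathcal A$. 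At such an $\alpha$ the antichain $\mathcal A$ is already maximal in all of $T$ and contained in $T\restriction\alpha$, whence $|\mathcal A|<\kappa$. Thus $T$ has no $\kappa$-sized antichain, and by the splitting arrangement no $\kappa$-sized chain, so $T$ is a $\kappa$-Souslin tree.

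The main obstacle I anticipate is the simultaneous management of the limit levels: the coherence of $\langle\mathcal C_\alpha\rangle$ must be leveraged to keep each $b^C_x$ well defined and consistent with earlier stages, so that normality and the level-size bound both hold, while the very same branches must remain flexible at their non-accumulation points to absorb the $\diamondsuit$-guessed antichains. Making the ``jump to $A$'' step precise --- in particular choosing, at each $\beta\in\nacc(C)$, a canonical node above the current branch segment that extends an element of $A$, and checking that clause~(2) guarantees enough such jumps to catch $\mathcal A$ cofinally in $\alpha$ --- is the technical heart of the argument.
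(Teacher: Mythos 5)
Your overall architecture is the same as the paper's: thread a branch $b^C_x$ along each $C\in\mathcal C_\alpha$ above each node $x$, use the coherence clause of $\boxtimes^*(\kappa)$ to pass accumulation points of $C$, and use clause~(2) plus $\diamondsuit$ to seal maximal antichains. But your sealing mechanism has a genuine gap, and it is exactly at the point you yourself identify as the technical heart. You steer $b^C_x$ at limit level $\alpha$ by the \emph{top-level} guess $Z_\alpha$ (``whenever $Z_\alpha$ codes a maximal antichain $A$ of $T\restriction\alpha$\dots at each $\beta\in\nacc(C)$ the branch jumps\dots to a node extending some element of $A$''). This makes the recursion defining $b^C_x$ depend on $\alpha$, not merely on level-local data, and that destroys the coherence you invoke: for $\bar\alpha\in\acc(C)$, the node $\bigcup\rng(b^C_x\restriction\bar\alpha)$ was produced by $Z_\alpha$-steering along $C\cap\bar\alpha$, whereas $T_{\bar\alpha}$ contains only limits of the stage-$\bar\alpha$ branches, which were produced by $Z_{\bar\alpha}$-steering along the members of $\mathcal C_{\bar\alpha}$. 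Since the guesses $Z_{\bar\alpha}$ and $Z_\alpha$ are unrelated, these branches diverge in general, so the limit at $\bar\alpha$ need not lie in $T_{\bar\alpha}$ and the recursion cannot continue past $\bar\alpha$. Your asserted agreement ``the branch constructed along $C$ will, upon restriction to an accumulation point $\bar\alpha$ of $C$, agree with the branch already constructed at stage $\bar\alpha$'' holds only if the rule choosing $b^C_x(\beta)$ depends solely on the previous node $b^C_x(\beta^-)$, the level $T_\beta$, and data indexed by $\beta$ itself --- and your rule is not of this form. (There is also no repair by enlarging $T_{\bar\alpha}$ to anticipate all future steering sets: that would require adding, at stage $\bar\alpha$, limits of branches steered by $Z_\alpha$ for unboundedly many $\alpha>\bar\alpha$, losing the bound $|T_{\bar\alpha}|<\kappa$.)

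A second, related gap sits in your verification: you need a single $\alpha$ lying both in the stationary set furnished by clause~(2) (applied to your set of ``sealing opportunity'' heights) and in the stationary set on which $Z_\alpha$ guesses $\mathcal A\cap(T\restriction\alpha)$; but two stationary subsets of $\kappa$ need not intersect, so ``intersecting these'' is unjustified. The paper's proof avoids both problems with one device (the microscopic approach): the $\diamondsuit$-guess is consulted \emph{locally}, at each successor step $\beta\in\nacc(C)$, via the set $Q^{C,\beta}_x:=\{t\in T_\beta\mid\exists s\in\Omega_\beta\,[(s\cup b^C_x(\beta^-))\s t]\}$, jumping to its $\lhd$-least element when nonempty. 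The recursion then depends only on the current node, $\Omega_\beta$, $T_\beta$ and $\lhd$, so coherence at accumulation points is automatic; and for a maximal antichain $\mathcal A$ one applies clause~(2) \emph{once}, to the stationary set $B$ of those $\beta$ for which $\Omega_\beta=\mathcal A\cap(T\restriction\beta)$ is a maximal antichain of $T\restriction\beta$ (stationary by a $\diamondsuit(H_\kappa)$-style elementary-submodel argument). At any $\alpha$ with $\sup(\nacc(C)\cap B)=\alpha$ for all $C\in\mathcal C_\alpha$, every node of $T_\alpha$ already sits above a member of $\mathcal A$ --- no guessing at $\alpha$ itself, and no intersection of two stationary sets, is ever needed. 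Your proposal is missing precisely this relocation of the $\diamondsuit$-guessing from the limit level $\alpha$ to the individual successor steps $\beta$ of $C$.
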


Now, there is an obvious way of introducing $\boxtimes^*(\kappa)$ by forcing. Conditions are sequences
$\langle \mathcal C_\alpha\mid\alpha\le\delta\rangle$ of successor length below $\kappa$,
such that for every limit ordinal $\alpha\le\delta$, the two bullets of Definition~\ref{xbox}(1) hold.
But is there another way?

The main result of this paper is the identification of a ceratin class of notions of forcing that (indirectly) introduce $\boxtimes^*(\kappa)$.
\begin{defn}\label{theclass}
For a regular uncountable cardinal $\lambda$, let $\mathbb C_\lambda$ denote the class of all notions of forcing $\mathbb P$ satisfying the two items:
\begin{enumerate}
\item $\mathbb P$ is \textit{$\lambda^+$-cc} and has size $\le2^\lambda$;
\item in $V^{\mathbb P}$, there exists a cofinal subset $\Lambda\s\lambda$ such that for every function $f\in{}^\lambda\lambda\cap V$, there exists some $\xi\in\Lambda$ with $f(\xi)<\min(\Lambda\setminus(\xi+1))$.
\end{enumerate}
\end{defn}

Clearly, assuming a $\gch$-type assumption, the various notions of forcing for adding a fast club to $\lambda$ (such as \cite[p.~97]{MR384542} and the minor variations \cite[p.~650]{MR716625}, \cite[p.~820]{rinot06}) belong to this class.
Also, for infinite regular cardinals $\theta<\lambda=\lambda^{<\theta}$, the L\'evy collapse $\col(\theta,\lambda)$ belongs to $\mathbb C_\lambda$.
The next proposition provides considerably more.
\begin{PROPB} Suppose that $\lambda$ is a regular uncountable cardinal satisfying $2^\lambda=\lambda^+$.

If $\mathbb P$ is a \textit{$\lambda^+$-cc} notion of forcing of size $\le 2^\lambda$,
then each of the following implies that $\mathbb P\in\mathbb C_\lambda$:
\begin{itemize}
\item $\mathbb P$ preserves the regularity of $\lambda$,
and is not ${}^\lambda\lambda$-bounding;\footnote{%
Recall that a notion of forcing is \emph{${}^\lambda\lambda$-bounding} if for every $g\in{}^\lambda\lambda\cap V^{\mathbb P}$,
there exists some $f\in{}^\lambda\lambda\cap V$ such that $g(\alpha)\le f(\alpha)$ for all $\alpha<\lambda$.}
\item $\mathbb P$ forces that $\lambda$ is a singular cardinal;
\item $\mathbb P$ forces that $\lambda$ is a singular ordinal satisfying  $2^{\cf(\lambda)}<\lambda$.
\end{itemize}
\end{PROPB}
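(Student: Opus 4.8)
The plan is to verify clause~(2) of Definition~\ref{theclass} in each of the three cases, since clause~(1) is precisely the standing hypothesis on $\mathbb P$. It is convenient to reformulate the target. For a cofinal $\Lambda\s\lambda$ with increasing enumeration $\langle\lambda_i\mid i<\otp(\Lambda)\rangle$ one has $\min(\Lambda\setminus(\lambda_i+1))=\lambda_{i+1}$, so clause~(2) asks that \emph{no} $f\in{}^\lambda\lambda\cap V$ satisfy $f(\lambda_i)\ge\lambda_{i+1}$ for all $i$; that is, no ground-model function may ``jump every gap'' of $\Lambda$. Replacing $f$ by $\alpha\mapsto\max\{\sup_{\beta\le\alpha}f(\beta),\alpha\}$ (which lies in $V$, is nondecreasing and inflationary, and dominates $f$) only makes jumping easier to commit, so I may restrict attention to nondecreasing inflationary $f$. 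The argument then splits according to whether $\mathbb P$ keeps $\lambda$ regular.

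Suppose first that $\lambda$ becomes singular, which covers the second and third bullets; the common feature I exploit is that $\mu:=\cf^{V^{\mathbb P}}(\lambda)<\lambda$. Working in $V^{\mathbb P}$, fix any continuous increasing cofinal sequence $\Lambda=\langle\lambda_i\mid i<\mu\rangle$. Given $f$ as above, define in $V$ the sequence $d_0=\lambda_0$, $d_{i+1}=f(d_i)$, and $d_i=\sup_{j<i}d_j$ at limits. If $f$ jumped every gap of $\Lambda$, then a routine induction (using continuity of $\Lambda$ at limit stages) gives $\lambda_i\le d_i$ for all $i<\mu$. But $\langle d_i\mid i<\mu\rangle\in V$ and $\mu<\lambda$, so regularity of $\lambda$ \emph{in $V$} forces $\sup_{i<\mu}d_i<\lambda$, contradicting $\sup_{i<\mu}\lambda_i=\lambda$. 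Hence no $f$ jumps every gap, and $\Lambda$ witnesses clause~(2).

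Now suppose $\mathbb P$ preserves the regularity of $\lambda$ and is not ${}^\lambda\lambda$-bounding; fix $g\in{}^\lambda\lambda\cap V^{\mathbb P}$ dominated everywhere by no ground-model function, and let $G$ be its running supremum, which is again undominated. As $\lambda$ stays regular, $G$ maps $\lambda$ into $\lambda$ and its set $C$ of closure points is a club; let $\langle c_i\mid i<\lambda\rangle$ be the (normal, hence continuous) increasing enumeration of $C$, and put $\Lambda:=C$. Suppose some $f$ as above jumped every gap, i.e.\ $f(c_i)\ge c_{i+1}$ for all $i$. Iterating $f$ from $c_0$ yields $\langle\mu_i\mid i<\lambda\rangle\in V$ with $\mu_0=c_0$, $\mu_{i+1}=f(\mu_i)$, and $\mu_i=\sup_{j<i}\mu_j$ at limits; the continuity of the enumeration of $C$ is exactly what lets the induction $c_i\le\mu_i$ pass through limit stages. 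Finally, every $\alpha<\lambda$ lies below its successor closure point $c_j\in C$, whose index satisfies $j\le\alpha+1$, whence $G(\alpha)<c_j\le\mu_j\le\mu_{\alpha+1}$ (using that $\langle\mu_i\rangle$ is nondecreasing). Thus $\alpha\mapsto\mu_{\alpha+1}$ is a ground-model function dominating $G$ everywhere, contradicting the choice of $g$.

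The main obstacle is the regular case, and it is instructive to see why the naive construction fails: a merely undominated function escapes each ground-model $f$ only at \emph{cofinally} many points (this much follows from a tail argument together with regularity of $\lambda$ in $V^{\mathbb P}$), so the gaps of the $g$-orbit $\lambda_{i+1}=g(\lambda_i)$ can still be jumped by some $f$ whose escape points all avoid the orbit. The remedy encoded above is to pass to a \emph{continuous} undominated function, namely the enumeration of the closure-point club $C$: continuity is precisely the ingredient that makes the transfinite iteration of $f$ close up at limits, thereby upgrading ``$f$ jumps every gap'' into ``$f$ dominates $G$ everywhere'', the very statement ruled out by non-boundedness.
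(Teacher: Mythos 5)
Your proof is correct, but in the two singular bullets it takes a genuinely different and substantially more elementary route than the paper. For the first bullet you are essentially doing what the paper does: both proofs take $\Lambda$ to be the club of closure points of an undominated function $g$; the paper concludes directly (majorize $f$ by a strictly increasing $f'\in V$, pick $\alpha$ with $f'(\alpha)<g(\alpha)$, and check that $\xi:=\sup(\Lambda\cap(\alpha+1))$ is a good gap), whereas you run a contradiction that upgrades ``$f$ jumps every gap'' to ``some ground-model function dominates the running supremum everywhere''; both are sound, yours being a little longer. The real divergence is in the second and third bullets: there the paper invokes heavy machinery --- Theorem~2.0 of D\v{z}amonja--Shelah and Proposition~2.1 of Gitik on outside guessing of clubs --- to produce a cofinal $\Lambda\s\lambda$ with $\sup(\Lambda\setminus C)<\lambda$ for every ground-model club $C$, and this is exactly where the hypotheses $2^\lambda=\lambda^+$, the preservation of $\lambda^+$ via the $\lambda^+$-cc, the derived inaccessibility of $\lambda$, and $2^{\cf(\lambda)}<\lambda$ get consumed. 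Your iteration argument --- iterate a monotone inflationary majorant of $f$ inside $V$ for $\mu:=\cf^{V^{\mathbb P}}(\lambda)<\lambda$ many steps, note that the orbit $\langle d_i\mid i<\mu\rangle$ lies in $V$ and is bounded below $\lambda$ by regularity of $\lambda$ in $V$, while continuity of the club lets the induction $\lambda_i\le d_i$ pass limit stages --- shows that \emph{any} club in $\lambda$ of order type $\cf(\lambda)$ already witnesses clause~(2) of the definition of $\mathbb C_\lambda$, and (applying it to tails) with cofinally many good gaps, so it even delivers the conclusion of the paper's Lemma~3.2 directly; it uses nothing beyond the regularity of $\lambda$ in $V$, making the arithmetic hypotheses and both citations superfluous in these cases (clause~(1) of the definition remains the standing cc-and-size hypothesis, as you note). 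What the paper's route buys in exchange is the genuinely stronger outside-guessing property --- a tail of $\Lambda$ inside \emph{every} ground-model club simultaneously --- which your argument cannot produce and which does require such hypotheses; but that extra strength is never used downstream, since the Main Theorem only consumes $\Lambda$ through Lemma~3.2. So your proof is a legitimate simplification of the singular cases, at the cost of not yielding the stronger guessing feature.
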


It follows that Cohen, Prikry, Magidor and Radin forcing,\footnote{%
This includes variations likes the supercompact Prikry forcing and supercompact Magidor forcing (that singularizes a successor cardinal $\lambda$),
and includes the instances of Radin forcing that preserves the inaccessibility of $\lambda$.}
as well as some of the Namba-like forcings from the recent paper by Adolf, Apter, and Koepke \cite{aak},
are all members of the class under consideration.

\begin{THM}  Suppose that $\lambda=\lambda^{<\lambda}$ is a regular uncountable cardinal satisfying $2^\lambda=\lambda^+$.

Let $\kappa:=\lambda^+$. For all $\mathbb P\in\mathbb C_\lambda$, $V^{\mathbb P}\models \boxtimes^*(\kappa)+\diamondsuit(\kappa)$.
\end{THM}

One of the aspects that makes the proof of the preceding somewhat difficult is the fact that we do not assume that $\mathbb P$ is cofinality-preserving,
let alone assume that $\lambda$ remains a regular cardinal in $V^{\mathbb P}$.
In anticipation of constructions of $\kappa$-Souslin trees of a more involved nature, we will actually want to obtain a stronger principle than $\boxtimes^*(\kappa)$,
which we denote by $\p^*(T,\lambda)$. This principle dictates a strong form of Clause~(2) of Definition~\ref{xbox}, in which it can be shown that all but nonstationarily many $\alpha\in T$ will satisfy $|C|=|\lambda|$ for all $C\in\mathcal C_\alpha$.
When combined with the welcomed possibility that $\mathbb P$ forces that $\lambda$ is a singular cardinal (in which case  $V^{\mathbb P}\models\lambda^{<\lambda}>\lambda$),
ensuring Clause~(1) of Definition~\ref{xbox} becomes a burden.

\medskip

The definition of the weak square principle $\square^*_\lambda$ may be found in \cite[p.~283]{MR309729}.
Whether  $\gch+\square^*_{\lambda}$ is consistent with the nonexistence of $\lambda^+$-Souslin trees for a singular cardinal $\lambda$ is an open problem (see \cite{MR2162107}).
In \cite{rinot23}, generalizing a theorem of Gregory from \cite{MR485361}, we have shown that any model of such a consistency would have to satisfy that all stationary subsets of $E^{\lambda^+}_{\neq\cf(\lambda)}$ reflect.
Now, by \cite[$\S11$]{MR1838355} and \cite[$\S4$]{MR1942302}, if $\gch$ holds and  $\lambda$ is a $\lambda^+$-supercompact cardinal, then in the generic extension by Prikry forcing (using a normal measure on $\lambda$),
$\gch+\square^*_\lambda$ holds, and every stationary subset of $E^{\lambda^+}_{\neq\cf(\lambda)}$ reflects.
The next corollary (which follows from Propositions A,B and the Main Theorem) implies that nevertheless, this model contains a $\lambda^+$-Souslin tree.

\begin{CORA} Suppose that $\lambda$ is a strongly inaccessible cardinal satisfying $2^\lambda=\lambda^+$.

If $\mathbb P$ is a \textit{$\lambda^+$-cc} notion of forcing of size $\le2^\lambda$
that makes $\lambda$ into a singular cardinal, then $\mathbb P$ introduces a 
$\lambda^+$-Souslin tree.
\end{CORA}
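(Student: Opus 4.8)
The plan is to derive the statement purely by chaining the three preceding results, so that essentially all the work lies in verifying their hypotheses for the given $\mathbb{P}$ and in confirming that $\lambda^+$ survives the forcing as a regular uncountable cardinal. First I would unpack strong inaccessibility: $\lambda$ is then regular and uncountable, and since it is a strong limit we have $2^\mu<\lambda$ for every $\mu<\lambda$, whence $\lambda^\mu=\lambda$ and therefore $\lambda^{<\lambda}=\sup_{\mu<\lambda}\lambda^\mu=\lambda$. Together with the standing assumption $2^\lambda=\lambda^+$, this shows that $\lambda$ meets both the hypothesis of Proposition~B (a regular uncountable cardinal with $2^\lambda=\lambda^+$) and that of the Main Theorem ($\lambda=\lambda^{<\lambda}$, regular uncountable, with $2^\lambda=\lambda^+$).

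Next I would route $\mathbb{P}$ through the two main results. By hypothesis $\mathbb{P}$ is $\lambda^+$-cc, of size $\le 2^\lambda$, and forces $\lambda$ to be a singular cardinal, which is precisely the second bullet of Proposition~B; hence $\mathbb{P}\in\mathbb{C}_\lambda$. Setting $\kappa:=\lambda^+$, the Main Theorem immediately gives $V^{\mathbb{P}}\models\boxtimes^*(\kappa)+\diamondsuit(\kappa)$.

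It remains to apply Proposition~A inside $V^{\mathbb{P}}$, and here the one point demanding care is that $\kappa=\lambda^+$ still be a regular uncountable cardinal in the extension. This follows from $\lambda^+$-cc, which preserves all cardinals and cofinalities $\ge\lambda^+$; and since $\mathbb{P}$ leaves $\lambda$ a cardinal (it is singularized, not collapsed), $\kappa$ remains $(\lambda^+)^{V^{\mathbb{P}}}$. Applying Proposition~A in $V^{\mathbb{P}}$ to $\kappa$, with the sequences witnessing $\boxtimes^*(\kappa)$ and $\diamondsuit(\kappa)$, then yields a $\kappa$-Souslin tree, i.e.\ a $\lambda^+$-Souslin tree, in the extension. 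I expect no substantive obstacle beyond this preservation bookkeeping: the corollary is a packaging of Propositions~A and~B together with the Main Theorem, and the only subtlety is ensuring that $\lambda^+$ is genuinely the regular uncountable cardinal to which Proposition~A is applied in $V^{\mathbb{P}}$.
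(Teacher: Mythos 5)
Your proof is correct and follows essentially the same route the paper prescribes for this corollary: Proposition B (second bullet) gives $\mathbb P\in\mathbb C_\lambda$, the Main Theorem (whose hypothesis $\lambda^{<\lambda}=\lambda$ you rightly extract from strong inaccessibility) gives $V^{\mathbb P}\models\boxtimes^*(\kappa)+\diamondsuit(\kappa)$, and Proposition A is then applied inside $V^{\mathbb P}$, where $\kappa=\lambda^+$ remains a regular uncountable cardinal (and remains the successor of $\lambda$) by the $\lambda^+$-cc --- exactly the preservation point you flag. The only difference is that the paper's in-section proof chains Proposition B with Theorem 3.4 to get the stronger principle $\p^*(T,\lambda)+\diamondsuit(\lambda^+)$ and cites external results to upgrade the conclusion to a \emph{free} $\lambda^+$-Souslin tree, whereas your chaining delivers precisely the corollary as stated.
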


Another corollary to the main result is the following:\footnote{But note that Proposition~\ref{p34} yields a stronger conclusion from a weaker arithmetic assumption.}
\begin{CORB} Suppose that $\theta<\lambda=\lambda^{<\lambda}$ are infinite regular cardinals, and $2^\lambda=\lambda^+$.

Then $\col(\theta,\lambda)$ introduces a $|\lambda|^+$-Souslin tree.
\end{CORB}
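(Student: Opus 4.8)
The plan is to run the Lévy collapse through the Main Theorem and then invoke Proposition~A, the only real care being to track how $\col(\theta,\lambda)$ rearranges cardinals. First I would confirm the hypotheses of the Main Theorem: as $\theta$ is infinite and $\theta<\lambda$, the cardinal $\lambda$ is regular and uncountable, and by assumption $\lambda=\lambda^{<\lambda}$ with $2^\lambda=\lambda^+$. I would then verify $\col(\theta,\lambda)\in\mathbb C_\lambda$. Since $\theta<\lambda=\lambda^{<\lambda}$ yields $\lambda^{<\theta}=\lambda$, this is exactly the instance flagged in the paragraph preceding Proposition~B: the poset has size $\lambda^{<\theta}=\lambda\le 2^\lambda$ and so is trivially $\lambda^+$-cc, giving Clause~(1) of Definition~\ref{theclass}; and Clause~(2) holds because the generic surjection $\theta\to\lambda$ gives rise to a strictly increasing cofinal sequence $\langle\lambda_i\mid i<\theta\rangle$ whose range $\Lambda$ meets, for each ground-model $f\in{}^\lambda\lambda\cap V$, the dense set of conditions forcing $f(\lambda_i)<\lambda_{i+1}$ for some $i$ (density holding because $f(\lambda_i)<\lambda$ always leaves room to choose $\lambda_{i+1}$ above it, and genericity handling all such $f$ at once).

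With $\kappa:=\lambda^+$, the Main Theorem now delivers $V^{\col(\theta,\lambda)}\models\boxtimes^*(\kappa)+\diamondsuit(\kappa)$. Because $\col(\theta,\lambda)$ is $\lambda^+$-cc it preserves all cardinals and cofinalities $\ge\lambda^+$, so $\kappa$ remains a regular uncountable cardinal in the extension; Proposition~A then applies there and yields a $\kappa$-Souslin tree in $V^{\col(\theta,\lambda)}$.

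It remains to name $\kappa$ correctly in the extension, and this cardinal bookkeeping is the only genuinely delicate point of the argument. The collapse is ${<}\theta$-closed and of size $\lambda$, so it fixes every cardinal $\le\theta$ and every cardinal $\ge\lambda^+$ while collapsing each cardinal in the interval $(\theta,\lambda]$ to $\theta$. Hence in $V^{\col(\theta,\lambda)}$ we have $|\lambda|=\theta$ and $(\lambda^+)^V=\theta^+=|\lambda|^+$, so the $\kappa$-Souslin tree produced above is precisely a $|\lambda|^+$-Souslin tree, as required. Note that, unlike Corollary~A, this argument uses the direct membership $\col(\theta,\lambda)\in\mathbb C_\lambda$ rather than Proposition~B; essentially all of the difficulty has been absorbed into the Main Theorem, and what remains is this identification together with the routine membership check.
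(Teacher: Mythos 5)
Your proof is correct and takes essentially the same route as the paper: Corollary~B is obtained from the Main Theorem together with Proposition~A, with the membership $\col(\theta,\lambda)\in\mathbb C_\lambda$ checked directly via a density argument on the generic surjection --- precisely the mechanism the paper uses (via the club $C_\lambda$ in the footnote to Claim~\ref{c341}) --- and with the same cardinal bookkeeping identifying $(\lambda^+)^V=\theta^+=|\lambda|^+$ in the extension. The only point worth adding is the paper's own footnote to Corollary~B: Proposition~\ref{p34} yields the stronger conclusion $\p^*(E^{\kappa}_\lambda,\theta)$ from the weaker hypothesis $\lambda^{<\theta}=\lambda$, but your derivation matches the stated corollary exactly.
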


At the end of this short paper we shall also quickly deal with the case that $\kappa$ is a former inaccessible:
\begin{PROPC} Suppose that $\lambda^{<\lambda}=\lambda$ is an infinite cardinal,
and $\kappa>\lambda$ is a strongly inaccessible cardinal.
If $\mathbb P$ is a $({<\lambda})$-distributive, $\kappa$-cc notion of forcing,
collapsing $\kappa$ to $\lambda^+$, then $V^{\mathbb P}\models\boxtimes^*(\kappa)+\diamondsuit(\kappa)$ holds.
\end{PROPC}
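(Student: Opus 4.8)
The plan is to pass to $V^{\mathbb P}$ and verify the two principles there, the decisive structural point being that, unlike in the setting of the Main Theorem, the present $\mathbb P$ \emph{preserves} the regularity of $\lambda$. First I would record the basic facts. Since $\mathbb P$ is $({<\lambda})$-distributive it adds no new sequence of ordinals of length $<\lambda$, so $\lambda$ stays regular, $\lambda^{<\lambda}=\lambda$ persists, and every set of ordinals of size $<\lambda$ in $V^{\mathbb P}$ already lies in $V$. Since $\mathbb P$ is $\kappa$-cc with $\kappa$ regular, $\kappa$ remains a cardinal, and by hypothesis $\kappa=\lambda^+$ in $V^{\mathbb P}$; moreover every club in $\kappa$ of $V^{\mathbb P}$ contains a club from $V$, whence $V$-stationary subsets of $\kappa$ stay stationary. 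In particular, for a fixed regular $\theta<\lambda$, the set $E^\kappa_\theta$ is computed the same way in both models, and any $V$-club-guessing sequence on it (which exists by Shelah's theorem, as $\kappa\ge\theta^{++}$) retains its guessing property in $V^{\mathbb P}$, since the clubs it must catch contain $V$-clubs.

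To obtain $\diamondsuit(\kappa)$, I would fix in $V$ a continuous $\in$-increasing chain $\langle M_\alpha\mid\alpha<\kappa\rangle$ of elementary submodels of some $H_\chi$, each of size $<\kappa$, with $\mathbb P\in M_0$, $M_\alpha\cap\kappa\in\kappa$, and $\mathrm{ro}(\mathbb P)\s\bigcup_\alpha M_\alpha$ (possible by inaccessibility, taking $\mathbb P$ of size $\le\kappa$). Writing $\mathbb B_\alpha:=\mathrm{ro}(\mathbb P)\cap M_\alpha$ for the induced regular subalgebra and $G_\alpha$ for $G\cap\mathbb B_\alpha$, a nice name for a given $Z\s\kappa$ in $V^{\mathbb P}$ is assembled from antichains of size $<\kappa=\cf(\kappa)$ whose members are absorbed into the $M_\alpha$; hence along a club of $\alpha<\kappa$ the set $Z\cap\alpha$ already belongs to $V[G_\alpha]$. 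Put $\mathcal A_\alpha:=\mathcal P(\alpha)\cap V[G_\alpha]$. Inaccessibility bounds $|\mathcal A_\alpha|$ strictly below $\kappa$ in $V$, so—$\kappa$ having been collapsed to $\lambda^+$—one gets $|\mathcal A_\alpha|\le\lambda=|\alpha|$ in $V^{\mathbb P}$ for all $\alpha\in[\lambda,\kappa)$. Thus $\langle\mathcal A_\alpha\mid\alpha<\kappa\rangle$ is a $\diamondsuit(\kappa)$-sequence in the guessing-by-a-small-family reformulation whose equivalence with $\diamondsuit(\kappa)$ is due to Kunen; in particular $2^\lambda=\lambda^+$ in $V^{\mathbb P}$.

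For $\boxtimes^*(\kappa)$, Clause~(1) of Definition~\ref{xbox} is the cheap one here: since $\lambda$ is regular and $\lambda^{<\lambda}=\lambda$ in $V^{\mathbb P}$, the weak square $\square^*_\lambda$ holds there (via a special $\lambda^+$-Aronszajn tree), furnishing a coherent sequence $\langle\mathcal C_\alpha\mid\alpha<\kappa\rangle$ with $1\le|\mathcal C_\alpha|\le\lambda<\kappa$. The real content is Clause~(2), the catching of every cofinal $A\s\kappa$ on the $\nacc$ of the clubs. Here I would feed in the club-guessing sequence on $E^\kappa_\theta$ surviving from $V$ (first paragraph) and aim to amalgamate it with the coherent sequence into a single sequence that is simultaneously coherent and catching, using the $\diamondsuit(\kappa)$ obtained above to drive a recursive, level-by-level insertion of $A$-points into the $\nacc$ sets, in the spirit of the microscopic constructions.

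The hard part will be precisely this amalgamation. A club-guessing sequence is in general not coherent, and threading $A$-points through the clubs naively destroys the coherence demanded by Clause~(1). The delicate manoeuvre is to perform the insertions only at points of cofinality $\theta<\lambda$ and in the manner dictated by the $\diamondsuit$-guess, so that at every accumulation point the altered club restricts exactly to a member of the family already committed to at that lower level. I expect this \emph{coherent sealing} of the catching onto the square sequence, rather than either cardinal-arithmetic computation, to be the crux. It is worth stressing that the regularity of $\lambda$ in $V^{\mathbb P}$ is what renders Clause~(1) trivial and confines the difficulty to Clause~(2)—the reverse of the Main Theorem, where the possible singularization of $\lambda$ makes Clause~(1) the principal burden.
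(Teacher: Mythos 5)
Your $\diamondsuit$ half is essentially sound, and in fact you build more than you use: the family $\mathcal A_\alpha=\mathcal P(\alpha)\cap V[G_\alpha]$ has size $\le\lambda$ and guesses every $Z\s\kappa$ on a \emph{club} of $\alpha$'s, which is a $\diamondsuit^*$-style (indeed $\diamondsuit^+$-style) sequence, not merely a $\diamondsuit$-sequence. The paper obtains the same object more cheaply, with no regular-subalgebra bookkeeping: by inaccessibility, $\mathcal N_\alpha:=\{\tau\in V_{\alpha+1}\mid\tau\text{ a }\mathbb P\text{-name}\}$ has size $<\kappa$, and by the $\kappa$-cc the sequence $\mathcal A_\alpha:=\{\tau_G\mid\tau\in\mathcal N_\alpha\}\cap\mathcal P(\alpha)$ is a $\diamondsuit^+(\lambda^+)$-sequence, giving $\diamondsuit^*(E^{\lambda^+}_\lambda)$ and $\diamondsuit(\lambda^+)$ at once. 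By invoking Kunen's equivalence to downgrade your sequence to plain $\diamondsuit(\kappa)$, you throw away precisely the strength that the second half of the argument needs.

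The $\boxtimes^*$ half contains a genuine gap: the ``coherent sealing'' of the catching property onto a coherent sequence is the entire content of the proposition, and you explicitly leave it as an aim rather than a proof. Moreover, the specific route is unlikely to succeed as stated: pointwise insertion of $A$-points into the clubs of a $\square^*_\lambda$-sequence destroys weak-square coherence (each modified club must restrict, at every accumulation point, to a member of the lower level's family, and those families are already committed by coherence with $\lambda$-many higher levels), and since Clause~(2) of Definition~\ref{xbox} demands that \emph{all} $C\in\mathcal C_\alpha$ catch $A$ at stationarily many $\alpha$, a recursion driven by plain $\diamondsuit(\kappa)$ gives no simultaneous control over a level whose family has size $\lambda$. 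The paper dissolves this difficulty rather than solving it: since $\mathbb P$ is $({<\lambda})$-distributive, $\lambda^{<\lambda}=\lambda$ persists, and then $\diamondsuit^*(E^{\lambda^+}_\lambda)+\lambda^{<\lambda}=\lambda$ entails $\p^*(E^{\lambda^+}_\lambda,\lambda)$ --- hence $\boxtimes^*(\kappa)$ --- by \cite{rinot23}. The point you miss is that no weak-square skeleton or $V$-club-guessing is needed at all: at $\delta$ with $\cf(\delta)<\lambda$ one may let $\mathcal C_\delta$ consist of \emph{all} clubs in $\delta$ of order-type $<\lambda$ (there are at most $\lambda$ of them, as $|\delta|\le\lambda=\lambda^{<\lambda}$), and at $\alpha\in E^{\kappa}_\lambda$ take a single $C_\alpha$ with $\acc(C_\alpha)\s E^\alpha_{<\lambda}$; coherence then holds automatically, and all the work is concentrated in making that single club catch every cofinal $A$ --- which is exactly where the club-frequent, size-$\le\lambda$ families $\mathcal A_\alpha$ (i.e., $\diamondsuit^*$, not $\diamondsuit$) are consumed.
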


Examples of notions of forcing satisfying the above requirements include the L\'evy collapse $\col(\lambda,{<\kappa})$, and the Silver collapse $\mathbb S(\lambda,{<\kappa})$.

\subsection*{Notation}
For an infinite cardinal $\lambda$, write  $\ch_\lambda$ for the assertion that $2^\lambda=\lambda^+$.
Next, suppose that $C,D$ are sets of ordinals.
Write $\acc(C):=\{\alpha\in C\mid \sup (C\cap\alpha) = \alpha>0 \}$, $\nacc(C) := C \setminus \acc(C)$,
and $\acc^+(C) := \{\alpha<\sup(C)\mid \sup (C\cap\alpha) = \alpha>0 \}$.
For any  $j < \otp(C)$, denote by $C(j)$ the unique element $\delta\in C$ for which $\otp(C\cap\delta)=j$.
For any ordinal $\sigma$, write
$\suc_\sigma(C) := \{ C(j+1)\mid j<\sigma\ \&\ j+1<\otp(C)\}$.
Write $D \sqsubseteq C$ iff there exists some ordinal $\beta$ such that $D = C \cap \beta$.
Write $D \sqsubseteq^* C$ iff there exists $\gamma < \sup(D)$ such that $D \setminus \gamma \sqsubseteq C \setminus \gamma$.
Write $D \sqx C$ iff (($D \sqsubseteq C$) or ($\cf(\sup(D))<\chi$)),
and write $D \sqx^* C$ iff (($D \sqsubseteq^* C$) or ($\cf(\sup(D)) <\chi$)).

\subsection*{Acknowledgment} We thank Gitik and Karagila for illuminating discussions on \cite{MR1357746} and \cite{MR1360144}.

\section{A construction of a Souslin tree from a weak hypothesis}

We begin this section by recalling the terminology relevant to trees and fixing some notation.

A \emph{tree} is a partially ordered set $(T,<_T)$ with the property that for every $x\in T$,
the downward cone $x_\downarrow:=\{ y\in T\mid y<_T x\}$ is well-ordered by $<_T$.
The \emph{height} of $x\in T$, denoted $\height(x)$,
is the order-type of $(x_\downarrow,<_T)$.
Then, the $\alpha^{\text{th}}$ level of $(T,<_T)$ is the set $T_\alpha:=\{x\in T\mid \height(x)=\alpha\}$.
We also write $T \restriction X := \{t \in T \mid \height(t) \in X \}$.
A tree  $(T,<_T)$ is said to be \emph{$\chi$-complete} if any $<_T$-increasing sequence of elements from $T$,
and of length ${<\chi}$, has an upper bound in $T$.
A tree $(T,<_T)$ is said to be \emph{normal} if for all ordinals $\alpha < \beta$
and every $x \in T_\alpha$, if $T_\beta\neq\emptyset$
then there exists some $y \in T_\beta$ such that $x <_T y$.
A tree $(T,<_T)$ is said to be \emph{splitting} if every node in $T$ admits at least two immediate successors.

Let $\kappa$ denote a regular uncountable cardinal.
A tree $(T,<_T)$ is a \emph{$\kappa$-tree} whenever $\{ \alpha\mid T_\alpha\neq\emptyset\}=\kappa$,
and $|T_\alpha|<\kappa$ for all $\alpha<\kappa$.
A subset $B\s T$ is a \emph{cofinal branch} if $(B,<_T)$ is linearly ordered and
$\{ \height(t)\mid t\in B\}=\{\height(t)\mid t\in T\}$.
A \emph{$\kappa$-Aronszajn tree} is a $\kappa$-tree with no cofinal branches.
A \emph{$\kappa$-Souslin tree} is a $\kappa$-Aronszajn tree that has no antichains of size $\kappa$.

A $\kappa$-tree is said to be \emph{binary} if it is a downward-closed subset of the complete binary tree ${}^{<\kappa}2$,
ordered by $\subset$.

\medskip

All the combinatorial principles considered in this paper are simplified instances of the proxy principle $\p^-(\kappa,\mu,\mathcal R,\theta,\mathcal S,\nu,\sigma,\mathcal E)$ that was introduced and studied in \cite{rinot22},\cite{rinot23},
but familiarity with those papers is not needed.

\begin{defn}
Suppose that $\kappa$ is a regular uncountable cardinal, and $S$ is a stationary subset of $\kappa$.
Let $\chi:=\min\{\cf(\alpha)\mid \alpha\in S\text{ limit}\}$.
The principle $\boxtimes^*(S)$ asserts the existence of a sequence $\langle \mathcal C_\alpha\mid\alpha<\kappa\rangle$ such that:
\begin{enumerate}
\item For every limit ordinal $\alpha<\kappa$:
\begin{itemize}
\item$\mathcal C_\alpha$ is a nonempty collection of club subsets of $\alpha$, with $\left| \mathcal C_\alpha \right| < \kappa$;
\item if $C \in \mathcal C_\alpha$ and $\bar\alpha$ is an accumulation point of $C$, then there exists some $D\in\mathcal C_{\bar\alpha}$ satisfying $D\mathrel{\sqx^*}C$;
\end{itemize}
\item For every cofinal $A\s\kappa$, there exist stationarily many $\alpha\in S$ such that $\sup(\nacc(C)\cap A)=\alpha$ for all $C \in \mathcal C_\alpha$.
\end{enumerate}
\end{defn}

\begin{remarks}
\begin{enumerate}[i.]
\item
While it is not entirely obvious, we omit the proof of the fact that $\boxtimes^*(S)$ for $S=\kappa$
coincides with Definition~\ref{xbox} given in the previous section.
\item Note that in general, it is not the case that $\boxtimes^*(S)$ implies $\boxtimes^*(T)$ for $T\supseteq S$.
\item By \cite[Theorem 1.8]{MR908147}, if $\kappa$ is a weakly compact cardinal, then $\boxtimes^*(S)$ fails for every stationary subset $S\s\kappa$.
\item In the langauge of \cite{rinot22},\cite{rinot23}, $\boxtimes^*(S)$ stands for $\p^-(\kappa,\kappa,{\sqx^*},1,\{S\},\kappa,1,\mathcal E_\kappa)$.
In particular, by \cite{rinot22}, $\boxtimes^*(E^\kappa_{\ge\chi})+\diamondsuit(\kappa)$ is consistent together with $\neg\diamondsuit(E^\kappa_{\ge\chi})$.
\end{enumerate}
\end{remarks}

Proposition A is a special case of the following:

\begin{prop}\label{prop24}
If $\kappa$ is a regular uncountable cardinal and $\chi < \kappa$ is a cardinal satisfying $\lambda^{<\chi}<\kappa$ for all $\lambda<\kappa$,
then $\boxtimes^*(E^\kappa_{\ge\chi})+\diamondsuit(\kappa)$ entails a normal, binary, splitting, $\chi$-complete $\kappa$-Souslin tree.
\end{prop}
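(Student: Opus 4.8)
The plan is to construct the desired tree $T$ as a downward-closed subtree of ${}^{<\kappa}2$, building its levels $\langle T_\alpha\mid\alpha<\kappa\rangle$ by recursion, letting $\diamondsuit(\kappa)$ supply a sequence $\langle R_\beta\mid\beta<\kappa\rangle$ that guesses, on a stationary set, every subset of the eventual tree (after identifying nodes with ordinals), and fixing a $\boxtimes^*(E^\kappa_{\ge\chi})$-sequence $\langle\mathcal C_\alpha\mid\alpha<\kappa\rangle$ at the outset. The successor step and the two kinds of limit step are treated differently, and the single cardinal $\chi$ governs all three in a coordinated way. Concretely: $T_0=\{\emptyset\}$; $T_{\alpha+1}=\{t^\frown\langle i\rangle\mid t\in T_\alpha,\ i<2\}$ (yielding binary, splitting, and successor-normality); for a limit $\alpha$ with $\cf(\alpha)<\chi$ I would let $T_\alpha$ consist of \emph{all} branches through $T\restriction\alpha$; and for $\alpha\in E^\kappa_{\ge\chi}$ I would let $T_\alpha$ consist of designated branches read off from $\mathcal C_\alpha$ via the branch map below. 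The arithmetic hypothesis keeps the tree $\kappa$-slim: at a small-cofinality limit the number of branches is at most $\lambda^{<\chi}<\kappa$ for the relevant $\lambda<\kappa$, while at an $E^\kappa_{\ge\chi}$-level $|T_\alpha|\le|\mathcal C_\alpha|\cdot|T\restriction\alpha|<\kappa$. Throwing in all branches at small-cofinality limits does triple duty: it secures $\chi$-completeness (an increasing sequence of length $<\chi$ has its union at a level of cofinality $<\chi$), it keeps those levels small, and it neutralizes the escape clause of $\sqx^*$ discussed next.

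The heart of the construction is a coherent, antichain-sealing branch map $\mathbf b$. For a club $C$ in a limit ordinal and a node $x$ (which $\mathbf b(x,C)$ first lifts to level $\min(C\setminus\height(x))$), I would define, by recursion on $\sup(C)$ and simultaneously with the levels, a node $\mathbf b(x,C)$ of height $\sup(C)$ extending $x$: going upward through $C$, take unions at accumulation points of $C$, and at each $\beta\in\nacc(C)$ with predecessor $\gamma:=\sup(C\cap\beta)$ extend the current node at level $\gamma$ up to level $\beta$, using the freedom in this extension to \emph{seal} --- if $R_\beta$ happens to be a maximal antichain of $T\restriction\beta$ and the current node is not already above a member of $R_\beta$, route the extension through a member of $R_\beta$, which exists by maximality. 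Two invariants must be maintained. First, sealing is permanent: once the branch passes above a node of $R_\beta$ it stays above it. Second, and crucially, tail-coherence: $\mathbf b(x,C)$ depends only on a tail of $C$, so that whenever $\bar\alpha\in\acc(C)$ the partial node $\mathbf b(x,C)\restriction\bar\alpha$ is a genuine member of the already-built $T_{\bar\alpha}$. This is where $\boxtimes^*$ enters structurally: if $\cf(\bar\alpha)<\chi$ then $T_{\bar\alpha}$ contains all branches and there is nothing to check; otherwise $\cf(\bar\alpha)\ge\chi$, so the escape clause of $\sqx^*$ is vacuous and the principle hands us $D\in\mathcal C_{\bar\alpha}$ with $D\sqsubseteq^*(C\cap\bar\alpha)$, whence tail-invariance of $\mathbf b$ gives $\mathbf b(x,C)\restriction\bar\alpha=\mathbf b(y,D)$ for a suitable $y\in T\restriction\bar\alpha$, placing it in $T_{\bar\alpha}$.

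With $T$ in hand I would check normality (at $E^\kappa_{\ge\chi}$-levels because $\mathbf b(x,C)$ is included for every $x$ and a fixed $C\in\mathcal C_\alpha\neq\emptyset$), that it is splitting, binary, $\chi$-complete, of height $\kappa$, and $\kappa$-slim. It then remains to show $T$ has no antichain of size $\kappa$: this finishes Souslinity and, since $T$ is splitting, also rules out a cofinal branch (the off-branch immediate successors along any cofinal branch form an antichain of size $\kappa$). So let $A$ be a maximal antichain and form the cofinal set
\[
A^*:=\{\beta<\kappa\mid R_\beta=A\cap(T\restriction\beta)\text{ is a maximal antichain of }T\restriction\beta\},
\]
the intersection of the club of levels reflecting maximality of $A$ with the stationary set of correct $\diamondsuit$-guesses. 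Applying Clause~(2) of $\boxtimes^*(E^\kappa_{\ge\chi})$ to $A^*$ yields, inside the club $D$ of levels where $A\cap(T\restriction\alpha)$ is maximal in $T\restriction\alpha$, a level $\alpha\in E^\kappa_{\ge\chi}$ with $\sup(\nacc(C)\cap A^*)=\alpha$ for every $C\in\mathcal C_\alpha$. At such $\alpha$ each node of $T_\alpha$ equals $\mathbf b(x,C)$, whose construction meets some $\beta\in\nacc(C)\cap A^*$ above $\height(x)$ where the guess is correct and sealing fires, so the node lies above a member of $A$ of height $<\alpha$. Normality propagates this above $\alpha$, maximality of $A\cap(T\restriction\alpha)$ handles everything below, so every node of $T$ is comparable to a member of $A\cap(T\restriction\alpha)$; hence $A\subseteq T\restriction\alpha$ and $|A|<\kappa$.

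The main obstacle, deserving the most care, is the simultaneous recursion defining $\mathbf b$ together with the levels while maintaining tail-coherence: one must verify that at each accumulation point the union node genuinely belongs to the level already built, which forces the precise bookkeeping that $\mathbf b$ be invariant under passing to $\sqsubseteq^*$-tails and under the initial lift of $x$. Everything else --- the cardinal arithmetic bounding level sizes, $\chi$-completeness, and the concluding count of the antichain --- is routine once $\mathbf b$ is correctly set up.
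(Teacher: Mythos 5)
Your proposal is correct and takes essentially the same route as the paper's proof: the same level-by-level construction (all branches at limit levels of cofinality $<\chi$, designated branches $\mathbf b^C_x$ read off the $\boxtimes^*$-sequence at levels of cofinality $\ge\chi$), the same canonical tail-invariant recursion along $C$ with $\diamondsuit$-sealing at points of $\nacc(C)$, the same $\sqsubseteq^*$-coherence argument placing limit nodes in already-built levels, and the same final count showing a maximal antichain is trapped below a level $\alpha$ produced by Clause~(2). The only cosmetic differences are that the paper works with a $\diamondsuit(H_\kappa)$-style sequence (equivalent to $\diamondsuit(\kappa)$) rather than your ordinal identification, and that nodes of height $\ge\alpha$ in the last step are handled by downward closure (restricting to level $\alpha$) rather than by ``normality'' as you phrased it.
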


\begin{proof} In~\cite[\S2]{rinot22}, we provided a construction of a
$\chi$-complete $\kappa$-Souslin using the stronger hypothesis $\boxtimes^-(E^\kappa_{\geq\chi})$.
Here we show that by taking some extra care in the construction,
we can get by assuming merely $\boxtimes^*(E^\kappa_{\ge\chi})$.

Let $\vec{\mathcal C}=\langle \mathcal C_\alpha\mid \alpha<\kappa\rangle$ be a witness to $\boxtimes^*(E^\kappa_{\ge\chi})$.
By \cite[\S2]{rinot22}, $\diamondsuit(\kappa)$ is equivalent to $\diamondsuit(H_\kappa)$, meaning that, in particular,
we may fix a sequence $\langle \Omega_\beta\mid\beta<\kappa\rangle$ satisfying the following:
For every $\Omega\s H_\kappa$ and $p\in H_{\kappa^+}$, there exists an elementary submodel $\mathcal M\prec H_{\kappa^+}$ containing $p$,
such that $\mathcal M\cap\kappa\in\kappa$ and $\mathcal M\cap\Omega=\Omega_{\mathcal M\cap\kappa}$.

Let $\lhd$ be some well-ordering of $H_\kappa$.
We shall recursively construct a sequence $\langle T_\alpha\mid \alpha<\kappa\rangle$ of levels
whose union will ultimately be the desired tree $T$.

Let $T_0:=\{\emptyset\}$, and for all $\alpha<\kappa$, let $T_{\alpha+1}:=\{ t{}^\smallfrown\langle0\rangle, t{}^\smallfrown\langle1\rangle\mid t\in T_\alpha\}$.

Next, suppose that $\alpha<\kappa$ is a nonzero limit ordinal,
and that $\langle T_\beta\mid \beta<\alpha\rangle$ has already been defined.
Constructing the level $T_\alpha$ involves deciding which branches
through $(T \restriction \alpha, \subset)$ will have their limits placed into the tree.
We need $T_\alpha$ to contain enough nodes to ensure that the tree is normal,
but we have to define $T_\alpha$ carefully, so that the resulting tree doesn't include large antichains.

Construction of the level $T_\alpha$ splits into two cases, depending on the value of $\cf(\alpha)$:

$\br$ If $\cf(\alpha)<\chi$, let $T_\alpha$ consist of the limits of all branches through $(T\restriction\alpha,\subset)$.
This construction ensures that the tree will be $\chi$-complete,
and as any branch through $(T\restriction\alpha,\subset)$ is determined by a subset of $T\restriction\alpha$ of cardinality $\cf(\alpha)$,
the arithmetic hypothesis ensures that $\left|T_\alpha\right| < \kappa$ at these levels.
Normality at these levels is verified by induction:
Fixing a sequence of ordinals of minimal order-type converging to $\alpha$
enables us to find, for any given $x \in T \restriction \alpha$,
a branch through $(T\restriction\alpha,\subset)$ containing $x$,
and the limit of such a branch will necessarily be in $T_\alpha$.

$\br$ Now suppose $\cf(\alpha)\ge\chi$.
Considering any $C \in \mathcal C_\alpha$,
the idea for ensuring normality at level $T_\alpha$ is to attach to each node $x \in T \restriction C$
some node $\mathbf b^C_x: \alpha \to 2$  above it,
and then let
\[
T_\alpha := \{ \mathbf b^C_x \mid C \in \mathcal C_\alpha, x \in T \restriction C \}.
\]

By the induction hypothesis, $|T_\beta|<\kappa$ for all $\beta<\alpha$, and by the choice of $\vec{\mathcal C}$, we have $|\mathcal C_\alpha|<\kappa$,
so that we are guaranteed to end up with $|T_\alpha|<\kappa$.

Let $C \in \mathcal C_\alpha$ and $x\in T\restriction C$ be arbitrary.
As $\mathbf b^C_x$ will be the limit of some branch through $(T\restriction\alpha,\subset)$ and above $x$,
it makes sense to describe $\mathbf b^C_x$ as the limit $\bigcup\rng(b^C_x)$ of a sequence $b^C_x\in\prod_{\beta\in C\setminus\dom(x)}T_\beta$ such that:
\begin{itemize}
\item $b^C_x(\dom(x))=x$;
\item $b_x^C(\beta') \subset b_x^C(\beta)$ for all $\beta'<\beta$ in $(C\setminus\dom(x))$;
\item $b^C_x(\beta)=\bigcup\rng(b^C_x\restriction\beta)$ for all $\beta\in\acc(C\setminus\dom(x))$.
\end{itemize}
Of course, we have to define $b^C_x$ carefully, so that the resulting tree doesn't include large antichains.
We do this by recursion:

Let $b^C_x(\dom(x)):=x$.
Next, suppose $\beta^-<\beta$ are consecutive points of $(C\setminus\dom(x))$,
and $b^C_x(\beta^-) \in T_{\beta^-}$ has already been identified.
In order to decide $b^C_x(\beta)$, we advise with the following set:
$$Q^{C, \beta}_x := \{ t\in T_\beta\mid \exists s\in \Omega_{\beta}[ (s\cup b^C_x(\beta^-))\s t]\}.$$
Now, consider the two possibilities:
\begin{itemize}
\item If $Q^{C,\beta}_x \neq \emptyset$, then let $b^C_x(\beta)$ be its $\lhd$-least element;
\item Otherwise, let $b^C_x(\beta)$ be the $\lhd$-least element of $T_\beta$ that extends $b^C_x(\beta^-)$.
Such an element must exist, as the level $T_\beta$ was constructed so as to preserve normality.
\end{itemize}

Finally, suppose $\beta \in \acc(C\setminus\dom(x))$
and $b^C_x\restriction\beta \in\prod_{\delta \in C \cap \beta \setminus\dom(x)}T_\delta$ has already been defined.
As promised, we let $b^C_x(\beta):=\bigcup\rng(b^C_x\restriction\beta)$.
It is clear that $b^C_x(\beta) \in {}^\beta 2$,
but we need more than that:

\begin{claim}\label{coherence}
$b^C_x (\beta) \in T_\beta$.
\end{claim}

\begin{proof}
We consider two cases, depending on the value of $\cf(\beta)$:

$\br$ If $\cf(\beta) < \chi$, then $T_\beta$ was constructed to consist of
the limits of all branches through $(T\restriction\beta,\subset)$,
including the limit of the branch $b^C_x\restriction\beta$, which is $b^C_x (\beta)$.

$\br$ Now suppose $\cf(\beta)\ge\chi$.
In this case, since $\beta \in \acc(C)$ and
by $\sqleft{\chi}^*$-coherence of the $\boxtimes^*(E^\kappa_{\ge\chi})$-sequence
$\vec{\mathcal C}$,
we can pick some $D \in \mathcal C_\beta$ such that $D \sqsubseteq^* C$,
and we can thus fix some $\gamma \in (C \cap D) \setminus \dom(x)$
such that $D \setminus \gamma = C \cap \beta \setminus \gamma$.
Put $d:=D \setminus \gamma$ and $y := b^C_x(\gamma)$.
Then $y\in T_\gamma$
and $\dom(b^D_y) = d = C \cap \beta \setminus \gamma = \dom(b^C_x) \cap \beta \setminus \gamma$.

It suffices to prove that $b^C_x \restriction d =b^D_y$,
as this will imply that $b^C_x(\beta)=\bigcup\rng(b^D_y)=\mathbf{b}^D_y \in T_\beta$,
since the limit of a branch is determined by any of its cofinal segments.
Thus, we prove by induction that
for every $\delta \in d$,
the value of $b^D_y(\delta)$ was determined in exactly the same way as $b^C_x(\delta)$:
\begin{itemize}
\item Clearly, $b^D_y(\min(d)) = y = b^C_x(\min(d))$, since $\min(d) = \gamma = \dom(y)$.
\item Suppose $\delta^-<\delta$ are successive points of $d$.
Notice that the definition of $Q^{C, \delta}_x$ depends only on
$b^C_x(\delta^-)$, $\Omega_\delta$, and $T_\delta$,
and so if $b^C_x (\delta^-) = b^D_y(\delta^-)$,
then $Q^{C, \delta}_x = Q^{D, \delta}_y$,
and hence $b^C_x(\delta) = b^D_y(\delta)$.

\item For $\delta \in \acc(d)$:
If the sequences are identical up to $\delta$, then their limits must be identical. \qedhere
\end{itemize}
\end{proof}

This completes the definition of $b^C_x$ for each $C \in \mathcal C_\alpha$ and each $x\in T\restriction C$,
and hence of the level $T_\alpha$.

Having constructed all levels of the tree, we then let
\[
T := \bigcup_{\alpha < \kappa} T_\alpha.
\]

Notice that for every $\alpha < \kappa$,
$T_\alpha$ is a subset of $^\alpha2$ of size $< \kappa$.
Altogether, $(T, \subset)$ is a normal, binary, splitting, $\chi$-complete $\kappa$-tree.

Our next task is proving that $(T,\subset)$ is $\kappa$-Souslin.
As any splitting $\kappa$-tree with no antichains of size $\kappa$ also has no chains of size $\kappa$,
it suffices to prove Claim~\ref{c233} below. For this, we shall need
the following:

\begin{claim}\label{c232}
Suppose that $A \subseteq T$ is a maximal antichain.
Then the set
\[
B := \{ \beta <\kappa \mid  A\cap(T\restriction\beta)= \Omega_\beta\text{ is a maximal antichain in }T\restriction\beta \}.
\]
is a stationary subset of $\kappa$.
\end{claim}
\begin{proof}
Let $D\s\kappa$ be an arbitrary club.
We must show that $D \cap B \neq \emptyset$.
Put $p := \{A,T,D\}$ and $\Omega := A$.
By our choice of the sequence $\langle \Omega_\beta\mid\beta<\kappa\rangle$,
pick $\mathcal M\prec H_{\kappa^{+}}$
with $p\in\mathcal M$ such that $\beta :=\mathcal M\cap\kappa$ is in $\kappa$ and $\Omega_\beta=\mathcal M \cap A$.
Since $D\in\mathcal M$ and $D$ is club in $\kappa$, we have $\beta\in D$. We claim that $\beta\in B$.

For all $\alpha<\beta$, by $\alpha,T\in \mathcal M$, we have $T_\alpha\in \mathcal M$,
and by $\mathcal M\models |T_\alpha|<\kappa$,
we have $T_\alpha\s \mathcal M$. So $T\restriction\beta\s \mathcal M$.
As $\dom(z)\in \mathcal M$ for all $z \in T\cap \mathcal M$,
we conclude that $T\cap \mathcal M=T\restriction\beta$.
Thus, $\Omega_\beta=A\cap(T\restriction\beta)$.
As $H_{\kappa^{+}}\models A\text{ is a maximal antichain in }T$,
it follows by elementarity that
$\mathcal M \models A\text{ is a maximal antichain in }T$.
Since $T\cap \mathcal M=T\restriction \beta$,
we get that $A\cap(T\restriction\beta)$ is a maximal antichain in $T\restriction\beta$.
\end{proof}

\begin{claim}\label{c233}
Suppose that $A \subseteq T$ is a maximal antichain. Then $|A|<\kappa$.
\end{claim}

\begin{proof}
Let $A \subseteq T$ be a maximal antichain.
By Claim~\ref{c232},
$$B := \{ \beta <\kappa \mid  A\cap(T\restriction\beta)= \Omega_\beta\text{ is a maximal antichain in }T\restriction\beta \}$$
is a stationary subset of $\kappa$.
Thus we apply $\boxtimes^*(E^\kappa_{\ge\chi})$ to obtain
an ordinal $\alpha\in E^\kappa_{\ge\chi}$ satisfying
\[
\sup (\nacc(C) \cap B) = \alpha
\]
for every $C \in \mathcal C_\alpha$.

We shall prove that $A \subseteq T \restriction \alpha$,
from which it follows that $|A| \leq |T \restriction \alpha| <\kappa$.

To see that $A \subseteq T \restriction \alpha$, consider any $z \in T \restriction (\kappa \setminus \alpha)$,
and we will show that $z \notin A$ by finding some element of $A \cap (T \restriction \alpha)$ compatible with $z$.

Since $\dom(z) \geq \alpha$,
we can let $y := z \restriction \alpha$. Then $y \in T_\alpha$ and $y \subseteq z$.
By construction, since $\cf(\alpha)\ge\chi$,
we have $y = \mathbf b^C_x = \bigcup_{ \beta \in C \setminus \dom(x) } b^C_x (\beta)$
for some $C \in \mathcal C_\alpha$ and some $x \in T \restriction C$.
Fix $\beta \in \nacc(C) \cap B$ with $\dom(x) < \beta < \alpha$.
Denote $\beta^-:=\sup(C\cap\beta)$. Then $\beta^- < \beta$ are consecutive points of $C \setminus \dom(x)$.
Since $\beta \in B$,
we know that $\Omega_\beta = A \cap (T \restriction \beta)$ is a maximal antichain in $T \restriction \beta$,
and hence there is some $s \in \Omega_\beta$ compatible with $b^C_x(\beta^-)$,
so that by normality of the tree, $Q^{C, \beta}_x \neq \emptyset$.
It follows that we chose $b^C_x(\beta)$ to extend some $s \in \Omega_\beta$.
Altogether, $s \subseteq b^C_x(\beta) \subset \mathbf b^C_x = y \subseteq z$.
Since $s$ is an element of the antichain $A$, the fact that $z$ extends $s$ implies that $z \notin A$.
\end{proof}

So $(T,\subset)$ is a normal, binary, splitting, $\chi$-complete $\kappa$-Souslin tree.
\end{proof}

\section{Main Results}

We begin this section by proving Proposition B:

\begin{prop}\label{p31} Suppose that $\lambda$ is a regular uncountable cardinal satisfying $2^\lambda=\lambda^+$.

If $\mathbb P$ is a \textit{$\lambda^+$-cc} notion of forcing of size $\le2^\lambda$,
then each of the following implies that $\mathbb P\in\mathbb C_\lambda$:
\begin{enumerate}
\item $\mathbb P$ preserves the regularity of $\lambda$,
and is not ${}^\lambda\lambda$-bounding;
\item $\mathbb P$ forces that $\lambda$ is a singular cardinal;
\item $\mathbb P$ forces that $\lambda$ is a singular ordinal, satisfying  $2^{\cf(\lambda)}<\lambda$.
\end{enumerate}
\end{prop}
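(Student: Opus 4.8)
The plan is to verify the two clauses of Definition~\ref{theclass} for $\mathbb P$. Clause~(1) is literally the standing hypothesis, so in each of the three scenarios the entire task is to produce, in $V^{\mathbb P}$, a cofinal $\Lambda\subseteq\lambda$ witnessing Clause~(2). In every case I will build $\Lambda$ as the range of a suitable strictly increasing sequence $\langle\xi_i\rangle$, assume toward a contradiction that Clause~(2) fails for some $f\in{}^\lambda\lambda\cap V$ --- so that $f(\xi_i)\ge\xi_{i+1}$ for all $i$, since $\xi_{i+1}=\min(\Lambda\setminus(\xi_i+1))$ --- and then derive an absurdity by an \emph{in-$V$} computation, exploiting that $\lambda$ is regular in $V$ even when it fails to be so in $V^{\mathbb P}$. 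Throughout, the verification of Clause~(2) will rest only on regularity of $\lambda$ in $V$ together with the relevant case hypothesis.

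I will treat the two singular scenarios (2) and (3) together. Here $\mu:=\cf^{V^{\mathbb P}}(\lambda)<\lambda$, so in $V^{\mathbb P}$ I fix a strictly increasing \emph{continuous} cofinal sequence $\langle\xi_j\mid j<\mu\rangle$ and set $\Lambda:=\{\xi_j\mid j<\mu\}$. Under the assumed failure I first replace $f$ by a non-decreasing $\hat f\in V$ with $\hat f\ge f$ (for instance $\hat f(\beta)=\sup_{\eta\le\beta}f(\eta)+\beta+1$, which stays below $\lambda$ by regularity of $\lambda$ in $V$), so that $\xi_{j+1}\le\hat f(\xi_j)$ for every $j$. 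Working inside $V$, I then define the iteration $\gamma_0:=\xi_0$, $\gamma_{j+1}:=\hat f(\gamma_j)$, and $\gamma_j:=\sup_{i<j}\gamma_i$ at limits. Since $\mu<\lambda$ and $\lambda$ is regular in $V$, an induction keeps every $\gamma_j<\lambda$ and yields $\sup_{j<\mu}\gamma_j<\lambda$, while a parallel induction (successor step by monotonicity of $\hat f$, limit step by continuity of $\langle\xi_j\rangle$) gives $\xi_j\le\gamma_j$ for all $j$. Hence $\lambda=\sup_{j<\mu}\xi_j\le\sup_{j<\mu}\gamma_j<\lambda$, the desired contradiction. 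This uses only $\cf^{V^{\mathbb P}}(\lambda)<\lambda$, so it covers (2) and (3) uniformly.

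For scenario~(1), I will fix, using the failure of ${}^\lambda\lambda$-bounding, a function $g\in{}^\lambda\lambda\cap V^{\mathbb P}$ dominated by no ground-model function; replacing $g$ by $\beta\mapsto\sup_{\eta\le\beta}g(\eta)+\beta+1$ (legitimate as $\lambda$ is regular in $V^{\mathbb P}$), I may take $g$ strictly increasing. Define $\Lambda:=\{\xi_i\mid i<\lambda\}$ by $\xi_0:=0$, $\xi_{i+1}:=g(\xi_i)+1$, and $\xi_i:=\sup_{j<i}\xi_j$ at limits; regularity of $\lambda$ in $V^{\mathbb P}$ guarantees each $\xi_i<\lambda$ and $\sup_i\xi_i=\lambda$. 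Assuming Clause~(2) fails for some $f\in V$, fix a non-decreasing $\hat f\in V$ with $\hat f\ge f$ and $\hat f(\delta)>\delta$, so that $\xi_{i+1}\le\hat f(\xi_i)$ for all $i$. The key claim is that $H:=\hat f\circ\hat f\in V$ dominates $g$ everywhere, contradicting non-domination: for $\beta=\xi_i\in\Lambda$ one has $g(\xi_i)<\xi_{i+1}\le\hat f(\xi_i)\le H(\xi_i)$; and for $\beta$ in a gap $(\xi_i,\xi_{i+1})$, strict monotonicity gives $g(\beta)<g(\xi_{i+1})<\xi_{i+2}\le\hat f(\xi_{i+1})$, while $\xi_{i+1}\le\hat f(\xi_i)\le\hat f(\beta)$ yields $\hat f(\xi_{i+1})\le H(\beta)$, so again $g(\beta)<H(\beta)$.

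The main obstacle --- and the reason scenario~(1) is the genuinely delicate one --- is exactly the step of controlling $g$ (respectively, the sequence) on the \emph{entire} interval between consecutive elements of $\Lambda$, not merely at the points $\xi_i$: this is what forces the two-fold composition $\hat f\circ\hat f$, one application to climb past $\xi_{i+1}$ and a second to climb past $\xi_{i+2}$. In the singular cases the short length $\mu<\lambda$ lets an in-$V$ iteration collapse below $\lambda$, but when $\lambda$ stays regular that route is unavailable, and one must instead convert the failure of Clause~(2) into a \emph{global} domination statement contradicting the non-${}^\lambda\lambda$-bounding hypothesis directly. A secondary point requiring care is that every ``without loss of generality'' normalization of $g$ and $f$ (to strictly increasing, respectively non-decreasing, functions bounded below $\lambda$) must preserve both non-domination and the inequality $\xi_{i+1}\le\hat f(\xi_i)$ --- which is precisely where regularity of $\lambda$ in the appropriate model is invoked.
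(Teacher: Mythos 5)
Your proposal is correct, but it reaches the conclusion by a genuinely different route in the two singular cases. For clause~(1) your argument is essentially the paper's with cosmetic differences: the paper takes $\Lambda$ to be the club of closure points of the unbounded function $g$ and argues directly, pushing an increasing ground-model majorant $f'$ of $f$ past a point where $g$ escapes it, whereas you build $\Lambda$ as the iteration ladder of $g$ and argue by contradiction via the composition $\hat f\circ\hat f$; both hinge on the same normalization-plus-monotonicity mechanism. For clauses~(2) and~(3), however, the paper does not argue directly at all: it first observes $(\lambda^+)^{V[G]}=(\lambda^+)^V$ from the $\lambda^+$-cc, and then invokes outside guessing of clubs --- \cite[Theorem~2.0]{MR1360144} in case~(2), where $\lambda$ is necessarily inaccessible and $2^\lambda=\lambda^+$ is used, and \cite[Proposition~2.1]{MR1357746} in case~(3), where $2^{\cf(\lambda)}<\lambda$ is used --- to obtain a cofinal $\Lambda\subseteq\lambda$ with $\sup(\Lambda\setminus C)<\lambda$ for \emph{every} ground-model club $C$, from which Clause~(2) of Definition~\ref{theclass} follows by considering the club of closure points of $f$. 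Your elementary argument --- that \emph{any} continuous cofinal sequence of minimal order type $\mu=\cf^{V^{\mathbb P}}(\lambda)<\lambda$ already escapes each ground-model $f$, since otherwise the length-$\mu$ iteration $\gamma_{j+1}:=\hat f(\gamma_j)$, computable in $V$, would dominate the sequence pointwise while its supremum stays below $\lambda$ by regularity of $\lambda$ in $V$ --- is self-contained and strictly more general: it uses neither $2^\lambda=\lambda^+$, nor the preservation of $\lambda^+$, nor inaccessibility of $\lambda$, nor $2^{\cf(\lambda)}<\lambda$, so it shows that any $\lambda^+$-cc poset of size $\le 2^\lambda$ making $\lambda$ non-regular belongs to $\mathbb C_\lambda$, handling (2) and (3) uniformly and rendering the arithmetic hypothesis in (3) unnecessary for this purpose (it is needed only for the cited guessing theorem). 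What the paper's route buys in exchange is the far stronger tail-containment property in every ground-model club, of which Definition~\ref{theclass}(2) is only a weak shadow; but since the rest of the paper consumes membership in $\mathbb C_\lambda$ only through Lemma~\ref{l2}, your weaker but cheaper conclusion suffices, and your club $\Lambda$ of order type $\cf(\lambda)$ even yields the conclusion of Lemma~\ref{l2} directly by running the same contradiction on tails.
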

\begin{proof} Let $G$ denote a $\mathbb P$-generic filter.

(1) Work in $V[G]$.
Since $\mathbb P$ is not ${}^\lambda\lambda$-bounding,
let us pick $g\in{}^\lambda\lambda$  with the property that for every $f\in{}^\lambda\lambda\cap V$,
there exists some $\alpha<\lambda$ with $f(\alpha) < g(\alpha)$. Since $\lambda$ is regular and uncountable,
the set $\Lambda:=\{\zeta<\lambda\mid g[\zeta]\s\zeta\}$ is a club in $\lambda$.
To see that $\Lambda$ works, let $f\in{}^\lambda\lambda\cap V$ be arbitrary.
In $V$, let $f':\lambda\rightarrow\lambda$ be a strictly increasing function such that $f(\xi)\le f'(\xi)$ for all $\xi<\lambda$.
Now, back in $V[G]$, pick $\alpha<\lambda$ such that $f'(\alpha) < g(\alpha)$.
Put $\xi:=\sup(\Lambda\cap (\alpha+1))$ and $\zeta:=\min(\Lambda\setminus(\xi+1))$.
Then $\xi \in \Lambda$ because $\Lambda$ is a club in $\lambda$ containing $0$, and
$f(\xi)\le f'(\xi) \leq f'(\alpha) < g(\alpha)<\zeta$ because $\alpha<\zeta$ and $\zeta\in\Lambda$.

(2) Note that as $\mathbb P$ forces that $\lambda$ is a singular cardinal,
$\lambda$ cannot be a successor cardinal in the ground model.
Also note that by the \textit{$\lambda^+$-cc} of $\mathbb P$, we have $(\lambda^+)^{V[G]}=(\lambda^+)^V$.
Thus, by \cite[Theorem 2.0]{MR1360144}, since
$\lambda$ is inaccessible, $(\lambda^+)^{V[G]}=(\lambda^+)^V$ and $2^\lambda=\lambda^+$,
there exists a cofinal $\Lambda\s \lambda$ in $V[G]$ such that $\sup(\Lambda\setminus C)<\lambda$ for every club $C$ in $\lambda$ from $V$.
Thus, to see that $\Lambda$ works, let $f\in{}^\lambda\lambda\cap V$ be arbitrary.
Consider the club $C:=\{\zeta<\lambda\mid f[\zeta]\s\zeta\}$. As $C\in V$, pick $\xi\in\Lambda$ such that $\Lambda\setminus\xi\s C$.
Put $\zeta:=\min(\Lambda\setminus(\xi+1))$. Then $\zeta\in C$ and hence $f(\xi)<\zeta$.

(3) Let $\kappa:=(\lambda^+)^V$. Work in $V[G]$. Let $\theta:=\cf(\lambda)$.
By $2^\theta<\lambda$, we have $(2^\theta)^+\le|\lambda|^+$. Since $\mathbb P$ is \textit{$\kappa$-cc},
we have $|\lambda|^+=\kappa$. In particular, $\cf(\kappa)=\kappa\ge(2^\theta)^+$. Also, by $2^\theta<\lambda$, we have $\theta^+<\lambda$.
Then, the two conditions of \cite[Proposition 2.1]{MR1357746} are satisfied and
hence there exists a cofinal $\Lambda\s \lambda$ such that $\sup(\Lambda\setminus C)<\lambda$ for every club $C$ in $\lambda$ from $V$.
Thus, we are done as in the previous case.
\end{proof}

\begin{lemma}\label{l2} Suppose that $\lambda$ is a regular uncountable cardinal, and $\mathbb P\in\mathbb C_\lambda$.

In $V^{\mathbb P}$, there exists a club $\Lambda\s\lambda$ of order-type $\cf(\lambda)$,
such that for every function $f\in{}^\lambda\lambda\cap V$, $\sup\{\xi\in\Lambda\mid f(\xi)<\min(\Lambda\setminus(\xi+1))\}=\lambda$.
\end{lemma}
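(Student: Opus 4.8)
\subsection*{Proof proposal}

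The plan is to start from an arbitrary witness to Clause~(2) of Definition~\ref{theclass} and then do two separate jobs: upgrade ``there exists a good $\xi$'' to ``there are cofinally many good $\xi$'', and repair the order-type. So fix in $V^{\mathbb P}$ a cofinal $\Lambda_0\s\lambda$ such that for every $f\in{}^\lambda\lambda\cap V$ there is some $\xi\in\Lambda_0$ with $f(\xi)<\min(\Lambda_0\setminus(\xi+1))$. For $f\in{}^\lambda\lambda\cap V$ write $G_f:=\{\xi\in\Lambda_0\mid f(\xi)<\min(\Lambda_0\setminus(\xi+1))\}$. First I would prove that $\sup(G_f)=\lambda$ for every such $f$, by contradiction. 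Suppose $\eta^*:=\sup(G_f)<\lambda$, and set $\rho:=\min(\Lambda_0\setminus(\eta^*+1))<\lambda$ and $F(\xi):=\max\{f(\xi),\rho\}$. Although $\rho$ is read off from $\Lambda_0$ in $V^{\mathbb P}$, it is merely an ordinal below $\lambda$, and for any fixed $r<\lambda$ the map $\xi\mapsto\max\{f(\xi),r\}$ lies in $V$; hence $F\in{}^\lambda\lambda\cap V$. Applying the hypothesis to $F$ yields some $\xi\in\Lambda_0$ with $F(\xi)<\min(\Lambda_0\setminus(\xi+1))$. If $\xi>\eta^*$ then $f(\xi)\le F(\xi)<\min(\Lambda_0\setminus(\xi+1))$, so $\xi\in G_f$, contradicting $\xi>\sup(G_f)$. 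If $\xi\le\eta^*$ then $\min(\Lambda_0\setminus(\xi+1))\le\min(\Lambda_0\setminus(\eta^*+1))=\rho\le F(\xi)$, contradicting the choice of $\xi$. This is the clean combinatorial heart of the argument.

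Next I would pass to a club by closing off. Put $\Lambda^*:=\Lambda_0\cup\acc^+(\Lambda_0)$, a club in $\lambda$. The point is that for every $\xi\in\Lambda_0$ one has $\min(\Lambda^*\setminus(\xi+1))=\min(\Lambda_0\setminus(\xi+1))$: the open interval between $\xi$ and its $\Lambda_0$-successor contains no point of $\Lambda_0$, hence no accumulation point of $\Lambda_0$, so the closure adds nothing there. Consequently $G_f\s\{\xi\in\Lambda^*\mid f(\xi)<\min(\Lambda^*\setminus(\xi+1))\}$, and the previous paragraph gives that this latter set has supremum $\lambda$ for every $f\in{}^\lambda\lambda\cap V$. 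If $\lambda$ is \emph{regular} in $V^{\mathbb P}$ we are already done, since then $\cf(\lambda)=\lambda$ and every club in $\lambda$ — in particular $\Lambda^*$ — has order-type $\lambda=\cf(\lambda)$.

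The remaining, and genuinely hard, case is that $\lambda$ is \emph{singular} in $V^{\mathbb P}$, where I must thin $\Lambda^*$ down to order-type $\mu:=\cf(\lambda)<\lambda$ without destroying the conclusion. Here I would first record a reduction: for $f\in{}^\lambda\lambda\cap V$ let $D_f:=\{\zeta<\lambda\mid f[\zeta]\s\zeta\}$, a club of $\lambda$ in $V$ (as $\lambda$ is regular in $V$). For any club $\Lambda=\{\xi_i\mid i<\mu\}$ written in increasing continuous fashion, if $\xi_{i+1}\in D_f$ then $f(\xi_i)<\xi_{i+1}=\min(\Lambda\setminus(\xi_i+1))$, so $\xi_i$ is a witness; and the points $\xi_{i+1}$ are exactly $\nacc(\Lambda)$. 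Thus it suffices to build a club $\Lambda$ of order-type $\mu$ such that $\nacc(\Lambda)\cap D$ is cofinal in $\lambda$ for every club $D$ of $\lambda$ lying in $V$ (then apply this with $D:=D_f$). A continuous cofinal sequence of length $\mu$ automatically has closed cofinal range, so $\Lambda\cap D$ is cofinal for free; the difficulty is to force the meeting to occur at \emph{non-accumulation} points, and to do so \emph{simultaneously} for all the $2^\lambda$-many ground-model clubs while spending only $\mu<\lambda$ nodes. This counting tension is exactly where bare ``$\cf(\lambda)$ became small'' is insufficient and where the full strength of Clause~(2) must be used — it forces $\Lambda_0$ to have large gaps cofinally often, which is the abstract form of outside guessing of clubs. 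I expect this thinning step, rather than the boosting or the closing-off, to be the main obstacle: concretely, I would attempt to locate the non-accumulation points of $\Lambda$ inside the cofinal families $\{G_f\}$, exploiting that these are downward directed (since $G_{\max\{f,g\}}=G_f\cap G_g$) to thread a single $\mu$-indexed sequence through them, and to verify the outcome once more by the contradiction template of the first paragraph applied to the thinned club.
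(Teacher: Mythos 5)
Your first two paragraphs are correct and dispose of the easy half: the boosting argument (dominating $f$ by $F(\xi):=\max\{f(\xi),\rho\}$, which lies in $V$ because $\rho$ is just an ordinal) and the closing-off to $\Lambda^*:=\Lambda_0\cup\acc^+(\Lambda_0)$ are both sound, and they settle the case where $\lambda$ remains regular in $V^{\mathbb P}$. But the lemma's real content is the singular case --- the class $\mathbb C_\lambda$ is designed to include Prikry, Magidor and Radin forcing --- and there your proof stops short. You reduce to producing a club $\Lambda$ of order-type $\mu:=\cf(\lambda)$ such that $\nacc(\Lambda)\cap D$ is cofinal in $\lambda$ for \emph{every} club $D\subseteq\lambda$ lying in $V$, and you explicitly leave this unproved. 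That reduction overshoots: requiring the successor points of $\Lambda$ to \emph{belong} to every ground-model club is an outside-guessing statement in the style of Gitik and D\v{z}amonja--Shelah. In the paper such theorems are used to prove that specific posets belong to $\mathbb C_\lambda$ (Proposition~\ref{p31}(2),(3)), but they are not available from Clause~(2) of Definition~\ref{theclass} for an abstract member of the class: Clause~(2) only guarantees, for each ground-model $f$, gaps of $\Lambda_0$ that jump over $f$ --- i.e.\ points of the relevant club $D_f$ landing \emph{inside} gaps of $\Lambda_0$, never in $\Lambda_0$ itself. Your suggested repair, threading a single $\mu$-indexed sequence through the family $\{G_f\}$, also cannot work as stated: there are up to $2^\lambda$-many $f\in{}^\lambda\lambda\cap V$, and downward directedness of a family of that size yields no pseudo-intersection-like thread of length $\mu<\lambda$.

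The missing idea is that no guessing of ground-model clubs is needed at all: thin \emph{inside} $\Lambda$ and transfer goodness by monotone domination. This is the paper's proof. Let $\mathring\Lambda$ be a club in $\lambda$ of order-type $\cf(\lambda)$ with $\nacc(\mathring\Lambda)\subseteq\Lambda$ (close off a cofinal subset of $\Lambda$ of order-type $\cf(\lambda)$). Given $f\in{}^\lambda\lambda\cap V$ and $\iota<\lambda$, fix $\epsilon\in\nacc(\mathring\Lambda)$ with $\sup(\mathring\Lambda\cap\epsilon)>\iota$, and in $V$ (where $\lambda$ is regular) a \emph{strictly increasing} $f'$ with $f'(\alpha)\ge\max\{f(\alpha),\epsilon\}$ for all $\alpha$. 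Clause~(2) gives $\xi\in\Lambda$ with $f'(\xi)<\zeta:=\min(\Lambda\setminus(\xi+1))$; since $\epsilon\in\Lambda$ and $\epsilon\le f'(\xi)<\zeta$ with $\xi,\zeta$ consecutive in $\Lambda$, we get $\epsilon\le\xi$. Now put $\mathring\xi:=\sup(\mathring\Lambda\cap(\xi+1))$; its $\mathring\Lambda$-successor $\mathring\zeta$ lies in $\nacc(\mathring\Lambda)\subseteq\Lambda$ and exceeds $\xi$, so $\mathring\zeta\ge\zeta$, while $\mathring\xi\le\xi$ and the monotonicity of $f'$ give $f(\mathring\xi)\le f'(\mathring\xi)\le f'(\xi)<\zeta\le\mathring\zeta$, and $\mathring\xi\ge\sup(\mathring\Lambda\cap\epsilon)>\iota$. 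Note that your own first paragraph already contains the domination device (with a constant); what it lacks is the observation that $\nacc(\mathring\Lambda)\subseteq\Lambda$ plus a strictly increasing dominating $f'\in V$ lets the witness $\xi$ for $f'$ be pulled back to $\mathring\xi$ --- so the hard ``thinning'' step you anticipated dissolves, and the argument handles regular and singular $\lambda$ uniformly, making your case split unnecessary as well.
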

\begin{proof} Let $G$ be $\mathbb P$-generic, and work in $V[G]$.
Pick $\Lambda$ as in Clause~(2) of Definition~\ref{theclass}.
Let $\mathring{\Lambda}$ be a club in $\lambda$ of order-type $\cf(\lambda)$ such that $\nacc(\mathring\Lambda)\s\Lambda$.
To see that $\mathring\Lambda$ works, fix an arbitrary $f\in{}^\lambda\lambda\cap V$ and an arbitrary $\iota<\lambda$.
We shall find $\mathring\xi\in\mathring\Lambda$ above $\iota$ satisfying $f(\mathring\xi)<\min(\mathring\Lambda\setminus(\mathring\xi+1))$.

Fix a large enough $\epsilon\in\nacc(\mathring\Lambda)$ such that $\sup(\mathring\Lambda\cap\epsilon)>\iota$.
In $V$, using the regularity of $\lambda$, pick a strictly increasing function $f':\lambda\rightarrow\lambda$ such that $f'(\alpha)\ge\max\{f(\alpha),\epsilon\}$ for all $\alpha<\lambda$.
By the choice of $\Lambda$, let us pick $\xi\in\Lambda$ such that $f'(\xi)<\min(\Lambda\setminus(\xi+1))$.
In particular, $\epsilon<\zeta$, where $\zeta:=\min(\Lambda\setminus(\xi+1))$.
As $\epsilon\in\nacc(\mathring\Lambda)\s\Lambda$, and as $\xi<\zeta$ are two successive elements of $\Lambda$ with $\epsilon<\zeta$, we have $\epsilon\le\xi$.
Put $\mathring\xi:=\sup(\mathring\Lambda\cap(\xi+1))$ and $\mathring\zeta:=\min(\mathring\Lambda\setminus(\mathring\xi+1))$.
By $\xi\ge\epsilon$ and $\sup(\mathring\Lambda\cap\epsilon)>\iota$, we have $\mathring\xi>\iota$.
By $\mathring\zeta\in\nacc(\mathring\Lambda)\setminus(\mathring\xi+1)\s\Lambda\setminus(\xi+1)$,
we have $\mathring\zeta\ge\zeta$.
Altogether:
\begin{itemize}
\item $\iota<\mathring\xi\le\xi<\zeta\le\mathring\zeta$, and
\item $f(\mathring\xi)\le f'(\mathring\xi)\le f'(\xi)<\zeta\le\mathring\zeta=\min(\mathring\Lambda\setminus(\mathring\xi+1))$,
\end{itemize}
So that $\mathring\xi$ is as sought.
\end{proof}

\begin{defn}
Suppose that $T$ is a stationary subset of a regular uncountable cardinal $\kappa$,
and $\xi\le\kappa$ is an ordinal.

The principle $\p^*(T,\xi)$ asserts the existence of a sequence $\left< \mathcal C_\alpha \mid \alpha < \kappa \right>$
such that:
\begin{enumerate}
\item For every limit ordinal $\alpha<\kappa$:
\begin{itemize}
\item$\mathcal C_\alpha$ is a nonempty collection of club subsets of $\alpha$ of order-type $\le\xi$, with $\left| \mathcal C_\alpha \right| < \kappa$;
\item if $C \in \mathcal C_\alpha$ and $\bar\alpha$ is an accumulation point of $C$, then $C\cap\bar\alpha\in\mathcal C_{\bar\alpha}$;
\end{itemize}
\item For every cofinal subset $A\s\kappa$, all but nonstationarily many $\alpha \in T$ satisfy:
\begin{itemize}
\item $|\mathcal C_\alpha|=1$, say, $\mathcal C_\alpha=\{C_\alpha\}$, and
\item
$\sup\{ \beta \in C_\alpha \mid \suc_{\sigma} (C_\alpha \setminus \beta) \subseteq A \} = \alpha$ for every $\sigma<\otp(C_\alpha)$.
\end{itemize}
\end{enumerate}
\end{defn}

\begin{remarks}
\begin{enumerate}[i.]
\item Note that for stationary sets $S\s T\s\kappa$, $\p^*(T,\xi)\implies \p^*(S,\xi)\implies \p^*(S,\kappa)\implies \boxtimes^*(S)$,
and that $\p^*(S,\kappa)\implies \boxtimes^*(\kappa)$.
Thus, the remaining theorems in this paper will focus on establishing $\p^*(T,\xi)+\diamondsuit(\kappa)$
for some stationary $T \subseteq \kappa$ and some $\xi\leq\kappa$ in various forcing scenarios.
\item By arguments that may be found in \cite[$\S6$]{rinot20}, $\p^*(S,\kappa)+\diamondsuit(\kappa)$ entails the existence of a $\kappa$-Souslin tree which is moreover \emph{free}.
We do not know whether $\boxtimes^*(\kappa)+\diamondsuit(\kappa)$ suffices for this application.
\item In the langauge of \cite{rinot22},\cite{rinot23}, $\p^*(T,\xi)$ stands for $\p^-(\kappa,\kappa,{\sq},1,\ns_\kappa\restriction T,2,{<\infty},\mathcal E_\xi)$.
\end{enumerate}
\end{remarks}

We now arrive at the main result of this paper:

\begin{thm}\label{thm46} Suppose that $\lambda^{<\lambda}=\lambda$ is a regular uncountable cardinal, and $\ch_\lambda$ holds.

Let $\kappa:=\lambda^+$ and $T:= E^{\kappa}_\lambda$.
For every $\mathbb P\in\mathbb C_\lambda$, $V^{\mathbb P}\models \p^*(T,\lambda)+\diamondsuit(\kappa)$.
\end{thm}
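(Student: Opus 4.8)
The plan is to fix a $\mathbb P$-generic filter $G$, work throughout in $V[G]$, and obtain $\diamondsuit(\kappa)$ and $\p^*(T,\lambda)$ by separate arguments. Since $\mathbb P$ is $\lambda^+$-cc, i.e.\ $\kappa$-cc, the cardinal $\kappa$ remains regular in $V[G]$ and every stationary subset of $\kappa$ --- in particular $T=E^\kappa_\lambda$ --- stays stationary, which is what lets me phrase Clause~(2) against the club filter of $V[G]$. For $\diamondsuit(\kappa)$: in $V$ one has $\kappa^{<\kappa}=(\lambda^+)^\lambda=\lambda^+=\kappa$ from $\lambda^{<\lambda}=\lambda$ and $\ch_\lambda$, so $\diamondsuit(\kappa)$ holds in $V$. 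To transport it, I fix an enumeration $\langle p_\eta\mid\eta<\kappa\rangle$ of $\mathbb P$ (legitimate, as $|\mathbb P|\le 2^\lambda=\kappa$) and a ground-model sequence $\langle Z_\alpha\mid\alpha<\kappa\rangle$ guessing subsets of $\kappa\times\kappa$. Any $X\s\kappa$ in $V[G]$ has a nice name whose $\kappa$-cc antichains code a single $W\s\kappa\times\kappa$ lying in $V$; decoding $Z_\alpha$ against $G$ through $\langle p_\eta\rangle$ produces a candidate sequence in $V[G]$, and the $\diamondsuit$-guessing of $W$ on a set stationary in $V$ (still stationary in $V[G]$ by $\kappa$-cc) shows the candidate guesses $X$ stationarily often. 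This is the standard preservation of $\diamondsuit$ by a $\kappa$-cc forcing of size $\le\kappa$, and I will use it in the equivalent $\diamondsuit(H_\kappa)$ form recorded in the proof of Proposition~\ref{prop24}.

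The substantive work is $\p^*(T,\lambda)$. Working in $V[G]$, let $\Lambda\s\lambda$ be the club of order-type $\cf(\lambda)$ provided by Lemma~\ref{l2}, and fix in $V$ a coherent system of clubs of order-type $\le\lambda$ with fewer than $\kappa$ clubs per level --- a weak-square ($\square^*_\lambda$) sequence, available since $\lambda^{<\lambda}=\lambda$. I then construct $\langle\mathcal C_\alpha\mid\alpha<\kappa\rangle$ by recursion: for $\alpha\in T$ I commit to a single club $C_\alpha$, so that $|\mathcal C_\alpha|=1$ (settling the first bullet of Clause~(2) outright), and off $T$ I retain exactly the coherent clubs needed to honour $C\cap\bar\alpha\in\mathcal C_{\bar\alpha}$. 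The $V$-weak-square anchors order-types and $\sqsubseteq$-coherence, while $\Lambda$ is used to lay the non-accumulation points of each $C_\alpha$ in consecutive runs indexed by the gaps of $\Lambda$; since $\otp(\Lambda)=\cf(\lambda)$ is cofinal in $\lambda$, these gap-lengths are cofinal in $\lambda$, which is exactly what is needed to realise every block length $\sigma<\otp(C_\alpha)$ for the ``${<\infty}$'' demand of Clause~(2). Clause~(1) is then read off directly: order-types $\le\lambda$, $|\mathcal C_\alpha|<\kappa$, and full coherence.

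It remains to verify Clause~(2). Given a cofinal $A\s\kappa$ in $V[G]$, I pass to a $\mathbb P$-name $\name A$ and extract, in $V$, a function $f_{\name A}\in{}^\lambda\lambda$ that bounds the name-complexity of successive initial segments of $A$ along the relevant $C_\alpha$. Applying the defining property of $\Lambda$ from Definition~\ref{theclass}(2) to $f_{\name A}$ --- that cofinally many $\xi\in\Lambda$ satisfy $f_{\name A}(\xi)<\min(\Lambda\setminus(\xi+1))$ --- shows that cofinally below $\alpha$ an entire $\Lambda$-gap's worth of non-accumulation points of $C_\alpha$ is forced into $A$, so that $\sup\{\beta\in C_\alpha\mid\suc_\sigma(C_\alpha\setminus\beta)\s A\}=\alpha$ for every $\sigma<\otp(C_\alpha)$. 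A pressing-down argument, using the coherence of $\langle\mathcal C_\alpha\rangle$ together with the reflection supplied by $\diamondsuit(H_\kappa)$, upgrades this from stationarily many to all-but-nonstationarily many $\alpha\in T$, which is the assertion of Clause~(2).

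I expect Clause~(1) to be the main obstacle, precisely when $\mathbb P$ singularizes $\lambda$. In that case $V[G]\models\lambda^{<\lambda}>\lambda$, so one cannot simply build a coherent, order-type-$\le\lambda$, $<\kappa$-branching sequence inside $V[G]$; the sequence must instead be pinned to the $V$-weak-square, with $\Lambda$ keeping order-types bounded by $\lambda$ and carving out the block structure. The delicate point is that this re-blocking has to stay simultaneously compatible with the coherence of Clause~(1) and with the strong hitting of Clause~(2) for sets $A$ appearing only in $V[G]$; reconciling these two demands under singularization is exactly the scenario that membership in $\mathbb C_\lambda$, through Lemma~\ref{l2}, is engineered to resolve.
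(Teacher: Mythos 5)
Your overall skeleton (pass to $V[G]$, preserve $\diamondsuit(\kappa)$ via nice names and $\kappa$-cc, use Lemma~\ref{l2} to build the clubs on $T$, keep $|\mathcal C_\alpha|=1$ on $T$) matches the paper, but the core of Clause~(2) has a genuine gap. You propose to handle an arbitrary cofinal $A\s\kappa$ in $V[G]$ by extracting a single ground-model function $f_{\name A}\in{}^\lambda\lambda$ ``bounding the name-complexity'' of $A$ and feeding it to the escaping property of $\Lambda$. This cannot work: the property in Definition~\ref{theclass}(2) only lets $\Lambda$ escape ground-model functions from $\lambda$ to $\lambda$, whereas $A$ is an arbitrary new subset of $\kappa=\lambda^+$; no single $f\in{}^\lambda\lambda\cap V$ carries enough information about $A$, and indeed the paper never attempts this. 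Instead the paper splits the problem in two. First, the $\diamondsuit(H_\kappa)$-sequence of nice names yields, for each $A\in V[G]$, a set $X_A\in V$ that is stationary \emph{in $V$} on which the interpreted guesses satisfy $Z_\beta=A\cap\beta$ (Claim~\ref{c743}). Second, purely ground-model machinery --- an enumeration $\{X^j\mid j<\kappa\}$ of all bounded subsets of $\kappa$ with each appearing cofinally often, the block-insertion functions $h_{x,\beta}$ placing $\suc_{\otp(x)}(X^j\setminus\epsilon)$ inside each block, and closure points of the function $f(\beta)=\min\{j\ge\beta\mid X^j=X\cap\beta\}$ --- guarantees that for every cofinal $X\s\kappa$ \emph{from $V$} there is a club $D_X$ such that every $\alpha\in T\cap D_X$ has $\sup\{\epsilon\in C_\alpha\mid\suc_\sigma(C_\alpha\setminus\epsilon)\s X\}=\alpha$ (Claim~\ref{c742}); here Lemma~\ref{l2} is applied to the auxiliary function $f_*$ built from $X$ and $\varrho_\alpha,\pi_\alpha$, not to a name for $A$. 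Finally, each $C_\alpha$ is post-processed to $C^\bullet_\alpha=\rng(g_\alpha)$, replacing each non-accumulation point $\beta$ by $\min((Z_\beta\cup\{\beta\})\setminus(\sup(C_\alpha\cap\beta)+1))$; this sparsification is exactly what converts hitting the $V$-set $X_A$ into hitting $A$ itself (Claim~\ref{c335}). Your sketch contains no analogue of either the $\{X^j\}$/$h_{x,\beta}$ device or the $g_\alpha$-modification, and without them Clause~(2) for sets existing only in $V[G]$ is unreachable. Relatedly, your proposed ``pressing-down upgrade'' from stationarily many to all-but-nonstationarily many $\alpha\in T$ is not an argument --- such upgrades are not available in general; the paper gets club-many witnesses directly because the hitting property is built into \emph{every} $\alpha\in T$ lying in the club $D_X$.

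Two further points. With $|\mathcal C_\alpha|=1$ for $\alpha\in T$, coherence becomes dangerous: if some $C\in\mathcal C_\alpha$ had an accumulation point $\bar\alpha\in T$, you would owe full coherence $C\cap\bar\alpha=C_{\bar\alpha}$, which your $\Lambda$-blocking does not provide. The paper avoids the issue by arranging $\acc(C_\alpha)\cap T=\emptyset$ outright (routing $C_\alpha$ through $\rng(\pi_\alpha)\s E^\alpha_{<\lambda}$ and observing that accumulation points inside a block lie in a small $V$-set, hence have $V$-cofinality $<\lambda$), and by letting $\mathcal C_\delta^\bullet$ for $\delta\notin T$ collect \emph{all} traces $C^\bullet_\alpha\cap\delta$ together with all $V$-clubs of size $<\lambda$; the nontrivial cardinality bound $|\mathcal C_\delta^\bullet|<\kappa$ then rests on the counting argument of Claim~\ref{c8103}, which uses the coherent injections $\varrho_\alpha$ (your weak-square anchor plays this role, but you never supply the counting, which is precisely the burden when $\mathbb P$ singularizes $\lambda$ and $\lambda^{<\lambda}>\lambda$ in $V[G]$). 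Lastly, a small but real misstatement: $\diamondsuit(\kappa)$ in $V$ does not follow from $\kappa^{<\kappa}=\kappa$ (\ch\ does not imply $\diamondsuit(\aleph_1)$); here it follows from $\ch_\lambda$ for uncountable $\lambda$ by Shelah's theorem, which is what the paper cites.
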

\begin{proof}
By $\ch_\lambda$ and the main result of \cite{Sh:922}, $\diamondsuit(\kappa)$ holds.
By \cite[\S2]{rinot22}, $\diamondsuit(\kappa)$ is equivalent to $\diamondsuit(H_{\kappa})$,
meaning that, in particular,
we may fix a sequence $\langle \Omega_\beta\mid\beta<\kappa\rangle$ satisfying the following:
For every $\Omega\s H_{\kappa}$ and $p\in H_{\kappa^+}$, there exists an elementary submodel $\mathcal M\prec H_{\kappa^+}$ containing $p$,
such that $\mathcal M\cap\kappa\in\kappa$ and $\mathcal M\cap\Omega=\Omega_{\mathcal M\cap\kappa}$.

Let $\mathbb P \in \mathbb C_\lambda$ be arbitrary.
By $|\mathbb P|\le|H_{\kappa}|$, we may assume that $\mathbb P\s H_{\kappa}$.
Let $G\s\mathbb P$ be $V$-generic, and work in $V[G]$.
Since $\mathbb P$ is a \textit{$\kappa$-cc} notion of forcing,
$\kappa$ remains a regular cardinal and $T$ remains stationary in $V[G]$.

For all $\beta<\kappa$ such that $\Omega_\beta$ happens to be a $\mathbb P$-name for a subset of $\beta$,
let  $Z_\beta$ denote its interpretation by $G$. For all other $\beta<\kappa$, let $Z_\beta:=\emptyset$.
\begin{claim}\label{c743} For every $A\in\mathcal P^{V[G]}(\kappa)$, there exists some $X_A\in\mathcal P^{V}(\kappa)$ such that:
\begin{enumerate}
\item $V\models X_A\text{ is stationary}$;
\item $V[G]\models X_A\s \{\beta<\kappa\mid Z_\beta=A\cap\beta\}$.
\end{enumerate}
\end{claim}
\begin{proof}
Fix an arbitrary $A\s\kappa$ in $V[G]$, and let $\name{A}$ be a $\mathbb P$-name for $A$.

Work in $V$.  For every $\alpha < \kappa$, put
$O_\alpha := \{ p \in \mathbb P \mid p \forces_\mathbb P \check \alpha \in \name A \}$,
and choose some maximal antichain $A_\alpha \subseteq O_\alpha$.
Then $\Omega := \bigcup\{ \{\check\alpha\}\times A_{\alpha}\mid \alpha<\kappa\}$ is a \emph{nice name} for $A$. In particular:
\begin{itemize}
\item $A_{\alpha}$ is an antichain in $\mathbb P$ for all $\alpha<\kappa$;
\item $\forces_\mathbb P \name A=\Omega$.
\end{itemize}
Since $\mathbb P\s H_{\kappa}$ is a \textit{$\kappa$-cc} notion of forcing, we also have:
\begin{itemize}
\item $A_{\alpha}\in H_{\kappa}$ for all $\alpha<\kappa$.
\end{itemize}

Altogether, $\Omega\s H_{\kappa}$, $|\Omega|\le\kappa$, and hence $\Omega \in H_{\kappa^+}$.
Let $$X_A:=\{ \mathcal M\cap\kappa\mid \mathcal M\prec H_{\kappa^+}, \Omega\in \mathcal M, \mathcal M\cap\kappa\in\kappa, \mathcal M\cap\Omega=\Omega_{\mathcal M\cap\kappa}\}.$$

(1) To see that $X_A$ is stationary, let $D$ be an arbitrary club in $\kappa$. Put $p:=\{\Omega,D\}$. By the fixed witness to $\diamondsuit(H_{\kappa})$,
we may now pick an elementary submodel $\mathcal M\prec H_{\kappa^+}$ such that $p\in\mathcal M$, $\mathcal M\cap\kappa\in\kappa$ and $\mathcal M\cap\Omega=\Omega_{\mathcal M\cap\kappa}$.
Denote $\beta:=\mathcal M\cap \kappa$. By $D\in\mathcal M$, we have $\beta\in D\cap X_A$.

(2) Let $\beta\in X_A$ be arbitrary, with witnessing model $\mathcal M$.
By $\Omega\in\mathcal M$ and $|A_\alpha|<\kappa$ for all $\alpha<\kappa$, we have $\Omega_\beta = \mathcal M\cap\Omega=\bigcup\{\{\check\alpha\}\times A_{\alpha}\mid \alpha<\beta\}$.
So $\Omega_\beta$ is a nice name and $\forces_\mathbb P \Omega_\beta = \Omega \cap \check \beta = \name A \cap \check \beta$.
That is, $\Omega_\beta$ is a $\mathbb P$-name whose interpretation in $V[G]$ is $A\cap\beta$,
and hence $V[G]\models Z_\beta=A\cap\beta$.
\end{proof}

Since $\mathbb P$ is a \textit{$\kappa$-cc} notion of forcing, every stationary subset of $\kappa$ from $V$ remains stationary in $V[G]$,
and so it follows from the previous claim that $\diamondsuit(\kappa)$ holds in $V[G]$.
Thus, we are left with verifying that $\p^*(T,\lambda)$ holds in $V[G]$.

\medskip

Work back in $V$.
By $\lambda^{<\lambda}=\lambda$, let us fix
a sequence of injections $\langle \varrho_\alpha:\alpha\rightarrow\lambda\mid \alpha<\kappa\rangle$ with the property
that for all $\delta<\kappa$, we have $|\{ \varrho_\alpha\restriction\delta\mid \alpha<\kappa\}|<\kappa$.\footnote{Note that the existence of such a sequence is equivalent to the existence of a special $\lambda^+$-Aronszajn tree,
which is a well-known consequence of $\lambda^{<\lambda}=\lambda$ (cf.~\cite{MR0039779} or \cite[Theorem 7.1]{MR776625}).}
For every $\alpha\in T$, let $\pi_\alpha:\lambda\rightarrow E^\alpha_{<\lambda}$ denote the monotone enumeration of some club in $\alpha$.
Notice that $\rng(\pi_\alpha) \cap T = \emptyset$ for every $\alpha \in T$.

By $\ch_\lambda$, let us fix an enumeration $\{X^j\mid j<\kappa\}$ of all bounded subsets of $\kappa$ such that each element appears cofinally often.

Next, for every nonempty $x\in[\kappa]^{<\lambda}$ and every $\beta\in x\cap T$, define $h_{x,\beta}:\otp(x)\rightarrow [\beta]^{1}\cup [\beta]^{(1+\otp(x))}$ as follows:

Put $h_{x,\beta}(0):=\{\sup(x\cap\beta)\}$.
Now, if $i\in(0,\otp(x))$ and $h_{x,\beta}\restriction i$ has already been defined,
write $\epsilon_{x,\beta}(i):=\sup(\bigcup_{i'<i}h_{x,\beta}(i'))+1$ and $j:=x(i)$.
Then:

\begin{itemize}
\item[$\br$] If $\sup(X^j\cap\beta)\neq\beta$, then let $h_{x,\beta}(i):=\{\epsilon_{x,\beta}(i)\}$;
\item[$\br$] If  $\sup(X^j\cap\beta)=\beta$, then let $h_{x,\beta}(i):=\{\epsilon_{x,\beta}(i)\}\cup\suc_{\otp(x)}(X^j\setminus \epsilon_{x,\beta}(i))$.
\end{itemize}

Note that all of the above are well-defined, thanks to the fact that $\cf(\beta)=\lambda>|x|$.

Finally, for every nonempty $x\in[\kappa]^{<\lambda}$, let $\overline{x}$ be the ordinal closure of $$x\cup\bigcup\{\bigcup\rng(h_{x,\beta})\mid \beta\in x\cap T, \beta\neq \min(x)\}.$$
Clearly, $\min(\overline x)=\min(x)$, $\max(\overline x)=\sup(x)$, and $|\overline x|\le\max\{|x|,\aleph_0\}$.
In particular, $\otp(\overline x)<\lambda$.
In addition, for all $\beta\in x\cap T$ such that $\gamma:=\sup(x\cap\beta)$ is nonzero,
we have that $\overline{x}\cap[\gamma,\beta)$ is equal to the ordinal closure of $\bigcup\rng(h_{x,\beta})$.

\medskip

Work in $V[G]$.
By $\mathbb P \in \mathbb C_\lambda$, let $\Lambda$ be the club given by Lemma~\ref{l2}.
For all $\alpha\in T$, let:
$$C_\alpha:= \bigcup \left\{ \overline{[\pi_\alpha(\xi),\pi_\alpha(\zeta))\cap(\{\pi_\alpha(\xi)\}\cup \varrho_\alpha^{-1}[\zeta])}\mid \xi\in\Lambda, \zeta=\min(\Lambda\setminus(\xi+1)) \right\}.$$
Notice that $\pi_\alpha[\Lambda]\s C_\alpha\s \alpha$, so that $C_\alpha$ is a club in $\alpha$.

Before stating the next claim, let us remind the reader that we do not assume that $\mathbb P$ is cofinality-preserving.
\begin{claim}
For every $\alpha \in T$:
\begin{enumerate}
\item $\acc(C_\alpha)\cap T=\emptyset$;
\item $\otp(C_\alpha)\le\lambda$.
\end{enumerate}
\end{claim}
\begin{proof} (1) Suppose that $\bar\alpha\in\acc(C_\alpha)$. Put $\xi':=\sup\{ \xi\in\Lambda\mid \pi_\alpha(\xi)\le\bar\alpha\}$.

$\br$ If $\bar\alpha=\pi_\alpha(\xi')$, then $\bar\alpha\in\rng(\pi_\alpha)$ and hence $\bar\alpha\notin T$.

$\br$ Otherwise, letting $\zeta':=\min(\Lambda\setminus(\xi'+1))$, we get that $\bar\alpha$ is an accumulation point of $$\overline{[\pi_\alpha(\xi'),\pi_\alpha(\zeta'))\cap (\{\pi_\alpha(\xi')\}\cup \varrho_\alpha^{-1}[\zeta'])},$$
which is a set of ordinals from $V$ of size $<\lambda$, and hence $\cf^V(\bar\alpha)<\lambda$,
so that $\bar\alpha \notin T$.

(2) We prove by induction on $\xi\in\Lambda$ that $\otp(C_\alpha\cap\pi_\alpha(\xi))<\lambda$ for all $\xi\in\Lambda$:

$\br$ For $\xi=\min(\Lambda)$, we have $C_\alpha\cap\pi_\alpha(\xi)=\emptyset$.

$\br$ Suppose that $\xi\in\Lambda$ and $\otp(C_\alpha\cap\pi_\alpha(\xi))<\lambda$.
Put $\zeta:=\min(\Lambda\setminus(\xi+1))$. Then $$\otp(C_\alpha\cap\pi_\alpha(\zeta))=\otp(C_\alpha\cap\pi_\alpha(\xi))+\otp\left(\overline{[\pi_\alpha(\xi),\pi_\alpha(\zeta))\cap(\{\pi_\alpha(\xi)\}\cup \varrho_\alpha^{-1}[\zeta])}\right),$$
so that $\otp(C_\alpha\cap\pi_\alpha(\zeta))$ is the sum of two ordinals $<\lambda$.
As $\lambda$ is a cardinal in $V$, it is an additively indecomposable ordinal, and hence the sum of the two is still $<\lambda$.

$\br$ Suppose that $\xi\in\acc(\Lambda)$ and $\otp(C_\alpha\cap\pi_\alpha(\epsilon))<\lambda$ for all $\epsilon\in\Lambda\cap\xi$.
That is, $$\{\otp(C_\alpha\cap\pi_\alpha(\epsilon))\mid \epsilon\in\Lambda\cap\xi\}\s \lambda.$$
As $\otp(C_\alpha\cap\pi_\alpha(\xi))=\sup\{\otp(C_\alpha\cap\pi_\alpha(\epsilon))\mid \epsilon\in\Lambda\cap\xi\}$,
and $|\Lambda\cap\xi|<|\Lambda|=\cf(\lambda)$, we infer that $\otp(C_\alpha\cap\pi_\alpha(\xi))<\lambda$.
\end{proof}

\begin{claim}\label{c8103} For every $\delta<\kappa$, we have $|\{ C_\alpha\cap\delta\mid \alpha\in T\}|<\kappa$.
\end{claim}
\begin{proof} Suppose not, and let $\delta$ be the least counterexample.
Pick a subset $A\s T\setminus(\delta+1)$ of size $\kappa$ such that:
\begin{itemize}
\item $\alpha\mapsto C_\alpha\cap\delta$ is injective over $A$;
\item $\alpha\mapsto\varrho_\alpha\restriction\delta$ is constant over $A$;
\item $\alpha\mapsto\sup\{ \xi\in\Lambda\mid \pi_\alpha(\xi)\le\delta\}$ is constant over $A$, with value, say, $\xi'$;
\item $\alpha\mapsto\pi_\alpha \restriction (\xi'+1)$ is constant over $A$.\footnote{Here, we use two facts: (1) If $\pi_\alpha(\xi')\le\delta$,
then $\pi_\alpha\restriction(\xi'+1)$ is the increasing enumeration of an element of $[\delta+1]^{<\lambda}\cap V$.
(2) By $V\models |[\delta+1]^{<\lambda}|=\lambda<\kappa$, we have $V[G]\models|[\delta+1]^{<\lambda}\cap V|<\kappa$.}
\end{itemize}

Clearly, $\alpha\mapsto C_\alpha\cap\pi_\alpha(\xi')$ is constant over $A$.
Write $\zeta':=\min(\Lambda\setminus(\xi'+1))$. Then it must be the case that $\alpha\mapsto C_\alpha \cap [\pi_\alpha(\xi'), \delta)$ is injective over $A$. That is,
$$\alpha\mapsto \overline{[\pi_\alpha(\xi'),\pi_\alpha(\zeta'))\cap (\{\pi_\alpha(\xi')\}\cup \varrho_\alpha^{-1}[\zeta'])}\cap\delta$$
is injective over $A$, contradicting the fact that the right-hand side of the above mapping is an element of $[\delta]^{<\lambda}\cap V$,
and $V\models|[\delta]^{<\lambda}|<\kappa$.
\end{proof}

Since $\mathbb P$ is a \textit{$\kappa$-cc} notion of forcing, every club $D\s\kappa$ from $V[G]$
contains a club $D'\s D$ from $V$. Hence, whenever we talk about clubs in $\kappa$, we may as well assume that they lie in $V$.

\begin{claim}\label{c742} For every cofinal subset $X\s\kappa$ from $V$,
there exists a club $D_X\s\kappa$, such that for every $\alpha\in T\cap D_X$ and every $\sigma<\lambda$, we have $\sup\{\epsilon\in C_\alpha\mid \suc_\sigma(C_\alpha\setminus\epsilon)\s  X\}=\alpha$.
\end{claim}
\begin{proof}
Work in $V$. Given a cofinal subset $X\s\kappa$,
define $f:\kappa\rightarrow\kappa$ by stipulating
$$f(\beta):=\min\{ j<\kappa\mid X^j=X\cap\beta\ \&\ j\ge\beta\}.$$

Consider the club $D_X:=\{ \alpha<\kappa \mid  f[\alpha]\s\alpha\}\cap\acc^+(\acc^+(X)\cap T)$.
Let $\alpha\in T\cap D_X$ be arbitrary.
Let $\tau<\alpha$ and $\sigma<\lambda$ be arbitrary. We shall prove the existence of some $\epsilon\in C_\alpha\setminus\tau$ such that $\suc_\sigma(C_\alpha\setminus\epsilon)\s X$.

Let $\varrho_\alpha^{-01}:\lambda\rightarrow\alpha$ denote a pseudoinverse of the injection $\varrho_\alpha:\alpha\rightarrow\lambda$, as follows:
$$\varrho_\alpha^{-01}(\xi):=\begin{cases}\varrho_\alpha^{-1}(\xi),&\text{if }\xi\in\rng(\varrho_\alpha);\\
0,&\text{otherwise}.\end{cases}$$
Define $f_0,f_1,f_2,f_3:\lambda\rightarrow\lambda$ by stipulating:
\begin{itemize}
\item $f_0(\xi):= \min(\varrho_\alpha[(\acc^+(X)\cap T \cap \alpha)\setminus \pi_\alpha(\xi)])$;
\item $f_1(\xi):= \varrho_\alpha(f(\varrho_\alpha^{-01}(\xi)))$;
\item $f_2(\xi):=\min\{\zeta<\lambda\mid \varrho_\alpha^{-01}(\xi)\le \pi_\alpha(\zeta)\}$;
\item $f_3(\xi):=\min\{\zeta<\lambda\mid \otp(\varrho_\alpha^{-1}[\zeta]\cap[\pi_\alpha(\xi),\pi_\alpha(\zeta))\ge\sigma\}$.
\end{itemize}
$f_0$ is well-defined since $\alpha \in \acc^+(\acc^+(X) \cap T)$.
$f_1$ is well-defined since $f[\alpha] \subseteq \alpha$.
$f_2$ is well-defined since $\rng(\pi_\alpha)$ is cofinal in $\alpha$.
$f_3$ is well-defined since
$\cf(\alpha)=\lambda$, so that $\otp([\pi_\alpha(\xi),\alpha))\ge\lambda>\sigma$ for all $\xi<\lambda$.

Define $f_*:\lambda\rightarrow\lambda$ by stipulating:
$$f_*(\xi):=\max\{f_0(\xi),f_1(f_0(\xi)),f_2(f_1(f_0(\xi))),f_3(\xi)\}.$$

From now on, work in $V[G]$.
By the choice of $\Lambda$, pick a large enough $\xi\in\Lambda$ such that $f_*(\xi)<\min(\Lambda\setminus(\xi+1))$ and $\pi_\alpha(\xi)>\tau$. Denote $\zeta:=\min(\Lambda\setminus(\xi+1))$.
Clearly, $C_\alpha\cap[\pi_\alpha(\xi),\pi_\alpha(\zeta))=\overline{x}$, where
$$x:=[\pi_\alpha(\xi),\pi_\alpha(\zeta))\cap (\{\pi_\alpha(\xi)\}\cup \varrho_\alpha^{-1}[\zeta]).$$

Put $\zeta_0:=f_0(\xi)$, $\zeta_1:=f_1(\zeta_0)$, $\zeta_2:=f_2(\zeta_1)$, and $\zeta_3:=f_3(\xi)$.
Evidently, $\zeta_0, \zeta_1 \in \rng(\varrho_\alpha)$.
By $f_*(\xi)<\zeta$, we have $\zeta_i<\zeta$ for all $i<4$.

Write $\beta:=\varrho_\alpha^{-1}(\zeta_0)$, $j:=f(\beta)$ and $\gamma:=\sup(x\cap\beta)$.
Then $\beta\in T$, $j=\varrho_\alpha^{-1}(\zeta_1)$, and $X^j=X\cap\beta$ is a cofinal subset of $\beta$.
As $\zeta_2<\zeta$, $\min(x)=\pi_\alpha(\xi)\notin T$ and $\beta\in T\setminus \pi_\alpha(\xi)$, we infer that
$$\tau<\pi_\alpha(\xi)\le\gamma<\beta\le j\le \pi_\alpha(\zeta_2)<\pi_\alpha(\zeta)<\alpha.$$

By $\zeta_0<\zeta$ and $\zeta_1<\zeta$, we altogether have $\{\beta,j\}\s x\setminus\{\min(x)\}$.
Fix some nonzero $i<\otp(x)$ such that $j=x(i)$. Recalling that $X^j=X\cap\beta$ is a cofinal subset of $\beta$, we have:
$$h_{x,\beta}(i)=\{\epsilon_{x,\beta}(i)\}\cup\suc_{\otp(x)}(X\cap\beta\setminus \epsilon_{x,\beta}(i)),$$
where $\otp(h_{x,\beta}(i)) = 1+\otp(x)$ and $h_{x,\beta}(i) \subseteq (\gamma,\beta)$.

As $f_3(\xi)=\zeta_3<\zeta$, we have $\otp(x)\ge\sigma$, and so in particular
$$\suc_\sigma(\bigcup\rng(h_{x,\beta})\setminus\epsilon_{x,\beta}(i))\s X.$$

As $\beta\in x\cap T$, $\beta\neq\min(x)$, and $\gamma = \sup(x\cap\beta)$, we know that $C_\alpha\cap[\gamma,\beta) = \overline{x} \cap [\gamma,\beta)$ is equal to the ordinal closure of $\bigcup\rng(h_{x,\beta})$.
Put $\epsilon:=\epsilon_{x,\beta}(i)$.
Then $\epsilon\in C_\alpha \cap (\gamma,\beta)\s C_\alpha \setminus \tau$, and $\suc_\sigma(C_\alpha\setminus\epsilon)\s X$, as sought.
\end{proof}

For every $\alpha\in T$, let $C_\alpha^\bullet:=\rng(g_\alpha)$, where
$g_\alpha:C_\alpha\rightarrow\alpha$ is defined by stipulating:\label{def_of_g}
$$g_\alpha(\beta):=
\begin{cases}
\beta,   &\text{if } \beta \in \acc(C_\alpha); \\
\min(Z_\beta\cup\{\beta\}),  &\text{if } \beta = \min(C_\alpha); \\
\min((Z_\beta\cup\{\beta\})\setminus (\sup(C_\alpha \cap \beta)+1)),  &\text{otherwise.}
\end{cases}$$

Note that  $C_\alpha^\bullet(i)\le C_\alpha(i)< C_\alpha^\bullet(i+1)$ for all $i<\otp(C_\alpha)$.
So $g_\alpha$ is strictly increasing and continuous, and $C^\bullet_\alpha$ is a club subset of $\alpha$ with $\otp(C^\bullet_\alpha) = \otp(C_\alpha) \leq \lambda$,
$\acc(C^\bullet_\alpha) = \acc(C_\alpha)$,
and $\nacc(C^\bullet_\alpha) = g_\alpha[\nacc(C_\alpha)]$.

\begin{claim}\label{c335} For every cofinal subset $A\s\kappa$ (from $V[G]$),
there exists a club $E_A\s\kappa$, such that for every $\alpha\in T\cap E_A$ and every $\sigma<\lambda$, we have $\sup\{\eta\in C^\bullet_\alpha\mid \suc_\sigma(C^\bullet_\alpha\setminus\eta)\s A\}=\alpha$.
\end{claim}
\begin{proof} Given a cofinal subset $A\s\kappa$ from $V[G]$,
let $X_A\in\mathcal P^{V}(\kappa)$ be given by Claim~\ref{c743}.
In particular, $V\models X_A\text{ is a stationary subset of }\kappa$.
As $V[G]\models \acc^+(A)\text{ is a club in }\kappa$,
and $V[G]$ is a \textit{$\kappa$-cc} forcing extension of $V$, we may fix some  $B\in V$ such that $V\models B\text{ is a club in }\kappa$,
and $V[G]\models B\s\acc^+(A)$. Put $X:=X_A\cap B$. Then:
\begin{enumerate}
\item $V\models X\text{ is stationary in }\kappa$;
\item $V[G]\models X\s \{\beta<\kappa\mid Z_\beta=A\cap\beta\text{ is a cofinal subset of }\beta\}$.
\end{enumerate}

Now, let $E_A:=D_X$, where $D_{X}$ is given by Claim~\ref{c742}.
Let $\alpha\in T\cap E_A$ and $\sigma<\lambda$ be arbitrary. Then $\sup\{\epsilon\in C_\alpha\mid \suc_\sigma(C_\alpha\setminus\epsilon)\s  X\}=\alpha$.
To see that
$\sup\{\eta\in C^\bullet_\alpha\mid \suc_\sigma(C^\bullet_\alpha\setminus\eta)\s A\}=\alpha$,
let $\tau < \alpha$ be arbitrary.
Choose $\epsilon \in C_\alpha$ such that
$\suc_\sigma(C_\alpha\setminus\epsilon)\s  X$ and $g_\alpha(\epsilon) > \tau$.
Let $\eta := g_\alpha(\epsilon)$.
To see that $\suc_\sigma(C^\bullet_\alpha\setminus\eta)\s A$,
consider an arbitrary $i<\sigma$, and we will show that $(C^\bullet_\alpha\setminus\eta)(i+1) \in A$.

Write $\beta := (C_\alpha\setminus\epsilon)(i+1)$.
Then $\beta \in \nacc(C_\alpha)$, and $\beta \in X$, so that $Z_\beta=A\cap\beta$ is a cofinal subset of $\beta$, and hence
$$(C^\bullet_\alpha \setminus \eta)(i+1)=g_\alpha(\beta)=\min((Z_\beta\cup\{\beta\})\setminus (\sup(C_\alpha \cap \beta)+1))=\min(A\setminus(\sup(C_\alpha \cap \beta)+1))\in A,$$
as required.
\end{proof}

Let $\mathcal C_0^\bullet:=\{\emptyset\}$, and $\mathcal C_{\delta+1}^\bullet:=\{\{\delta\}\}$ for all $\delta<\kappa$.
For every $\delta\in T$, let $\mathcal C_\delta^\bullet:=\{C_\delta^\bullet\}$,
and for every $\delta\in\acc(\kappa)\setminus T$, let
$$\mathcal C_\delta^\bullet:=\{ C_\alpha^\bullet\cap\delta\mid \alpha\in T, \sup(C_\alpha^\bullet\cap\delta)=\delta\}\cup\{ c\in[\delta]^{<\lambda}\cap V\mid c\text{ is a club in }\delta\}.$$

\begin{claim} $|\mathcal C_\delta^\bullet| < \kappa$ for all $\delta<\kappa$.
\end{claim}
\begin{proof} Suppose not, and let $\delta\in\acc(\kappa)\setminus T$ be a counterexample.
As $V\models |[\delta]^{<\lambda}|\le\lambda < \kappa$,
and by Claim~\ref{c8103}, let us pick $\alpha<\alpha'$ both from $T$ and above $\delta$ such that $C_\alpha^\bullet\cap\delta\neq C_{\alpha'}^\bullet\cap\delta$,
$\sup(C_\alpha^\bullet\cap\delta)=\delta=\sup(C_{\alpha'}^\bullet\cap\delta)$, and $C_\alpha\cap\delta=C_{\alpha'}\cap\delta$.
By $\delta\in\acc(C_\alpha^\bullet)$, we have $g_\alpha(\delta)=\delta$, and hence $C_\alpha^\bullet\cap\delta=g_\alpha[\delta]$.
Likewise, $C_{\alpha'}^\bullet\cap\delta=g_{\alpha'}[\delta]$. So $g_\alpha\restriction\delta\neq g_{\alpha'}\restriction\delta$,
contradicting the fact that $C_\alpha\cap\delta=C_{\alpha'}\cap\delta$.
\end{proof}

Altogether, $\langle \mathcal C_\delta^\bullet\mid\delta<\kappa\rangle$ witnesses $\p^*(T,\lambda)$.
\end{proof}

\begin{cor} Suppose that $\lambda$ is a strongly inaccessible cardinal satisfying $2^\lambda=\lambda^+$.

If $\mathbb P$ is a \textit{$\lambda^+$-cc} notion of forcing of size $\le2^\lambda$
that makes $\lambda$ into a singular cardinal,
then $\mathbb P$ introduces a free $\lambda^+$-Souslin tree.
\end{cor}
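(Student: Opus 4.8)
The plan is to combine the paper's main results rather than to build anything new: Proposition~\ref{p31} to locate $\mathbb P$ in the class $\mathbb C_\lambda$, Theorem~\ref{thm46} to extract the proxy principle in the extension, and the remarks following the definition of $\p^*(T,\xi)$ to convert that principle into a \emph{free} tree. First I would discharge the hypotheses of Theorem~\ref{thm46}. Since $\lambda$ is strongly inaccessible it is regular and uncountable, and $2^\mu<\lambda$ for all $\mu<\lambda$ yields $\lambda^{<\lambda}=\lambda$; together with the assumed $2^\lambda=\lambda^+$, i.e.\ $\ch_\lambda$, this is precisely what the theorem requires. Next I would verify $\mathbb P\in\mathbb C_\lambda$: as $\mathbb P$ is $\lambda^+$-cc of size $\le 2^\lambda$ and forces $\lambda$ to be a singular cardinal, clause~(2) of Proposition~\ref{p31} applies verbatim. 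Note that this is exactly the clause designed to tolerate the failure of cofinality preservation, which is what makes singularizing $\lambda$ admissible here.

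With these in hand, Theorem~\ref{thm46}, applied with $\kappa:=\lambda^+$ and $T:=E^\kappa_\lambda$, gives $V^{\mathbb P}\models\p^*(T,\lambda)+\diamondsuit(\kappa)$. By the chain of implications recorded in the first remark following the definition of $\p^*$, weakening the order-type bound from $\lambda$ to $\kappa$ yields $\p^*(T,\kappa)$, so $V^{\mathbb P}\models\p^*(T,\kappa)+\diamondsuit(\kappa)$. Finally, the second such remark (citing \cite{rinot20}) tells us that $\p^*(T,\kappa)+\diamondsuit(\kappa)$ entails a \emph{free} $\kappa$-Souslin tree. Since $\mathbb P$ is $\kappa$-cc, $\kappa=(\lambda^+)^V=(\lambda^+)^{V[G]}$ is preserved and $\lambda$ remains a (now singular) cardinal in $V[G]$, so this tree is exactly a free $\lambda^+$-Souslin tree.

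I do not anticipate a genuine obstacle, since no new combinatorics is needed; the work is entirely bookkeeping across the forcing. The three points to keep honest are: that $\kappa$ survives as a cardinal so that the conclusion really concerns $\lambda^+$; that $T=E^\kappa_\lambda$ stays stationary so that $\p^*(T,\cdot)$ is non-vacuous (both handed to us by the $\kappa$-cc, as already noted in the proof of Theorem~\ref{thm46}); and that the adjective \emph{free} is secured, which is the reason for routing the argument through $\p^*(T,\kappa)$ and the second remark rather than through the bare Souslin tree delivered by Proposition~A.
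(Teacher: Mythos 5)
Your proposal is correct and takes essentially the same route as the paper: Proposition~\ref{p31}(2) places $\mathbb P$ in $\mathbb C_\lambda$ (with inaccessibility supplying $\lambda^{<\lambda}=\lambda$), Theorem~\ref{thm46} yields $V^{\mathbb P}\models\p^*(T,\lambda)+\diamondsuit(\lambda^+)$ for $T=E^{\lambda^+}_\lambda$, and the freeness is ultimately discharged to \cite[$\S6$]{rinot20}. The only (harmless) difference is the final bookkeeping: the paper invokes \cite{rinot23} to upgrade $\p^*(T,\lambda)+\diamondsuit(\lambda^+)$ to the stronger proxy principle $\p(\lambda^+,\lambda^+,{\sq},\lambda^+,\{\lambda^+\},2,1,\mathcal E_{\lambda^+})$ before citing \cite[$\S6$]{rinot20}, whereas you weaken $\p^*(T,\lambda)$ to $\p^*(T,\kappa)$ and quote the paper's own remark, which rests on the same source.
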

\begin{proof} By Proposition~\ref{p31}, $\mathbb P\in\mathbb C_\lambda$.
By Theorem~\ref{thm46}, then, in $V^{\mathbb P}$, $\p^*(T,\lambda)+\diamondsuit(\lambda^+)$ holds for some stationary subset $T$ of $\lambda^+$.

Finally, by \cite{rinot23}, $\p^*(T,\lambda)+\diamondsuit(\lambda^+)$ entails $\p(\lambda^+,\lambda^+,{\sq},\lambda^+,\{\lambda^+\},2,1,\mathcal E_{\lambda^+})$,
which by the arguments of \cite[$\S6$]{rinot20} suffices for the construction of a free $\lambda^+$-Souslin tree.
\end{proof}

By Theorem~\ref{thm46}, if $\theta<\lambda=\lambda^{<\lambda}$ are infinite regular cardinals,
and $\ch_\lambda$ holds, then $V^{\col(\theta,\lambda)}\models\p^*(T,\lambda)+\diamondsuit(\kappa)$,
where $\kappa := \lambda^+$ and $T := E^\kappa_\lambda$,
provided a fact we have already mentioned but did not prove: $\col(\theta,\lambda) \in \mathbb C_\lambda$ in this scenario.

The next proposition shows that moreover $\p^*(T,|\lambda|)$ holds in the extension.
A byproduct of its proof, will also establish that indeed $\col(\theta,\lambda) \in \mathbb C_\lambda$.

\begin{prop}\label{p34} Suppose that $\theta<\lambda=\lambda^{<\theta}$ are infinite regular cardinals,
and $\ch_\lambda$ holds.

Let $\kappa:=\lambda^+$ and $T:=E^{\kappa}_\lambda$. Then $V^{\col(\theta,\lambda)}\models\p^*(T,\theta)+\diamondsuit(\kappa)$.
\end{prop}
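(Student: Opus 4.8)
The plan is to follow the proof of Theorem~\ref{thm46}, taking advantage of the fact that $\col(\theta,\lambda)$ is $({<}\theta)$-closed of size $\lambda^{<\theta}=\lambda$, hence $\kappa$-cc for $\kappa=\lambda^+$. First I would record the behaviour of cardinal arithmetic in $V[G]$: the regular cardinal $\theta$ is preserved, $\lambda$ is turned into an ordinal of cardinality and cofinality $\theta$, $\kappa$ is preserved and equals $\theta^+$, and $({<}\theta)$-closure gives $(2^\mu)^{V[G]}=(2^\mu)^V$ for every $\mu<\theta$. The verification of $\diamondsuit(\kappa)$ is then identical to that in Theorem~\ref{thm46}: by $\ch_\lambda$ and \cite{Sh:922} one has $\diamondsuit(\kappa)$ in $V$, and the nice-name argument of Claim~\ref{c743} pushes it to $V[G]$ (yielding in particular $2^\theta=\kappa$ there, so that the enumeration of bounded subsets of $\kappa$ remains available). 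Along the way I would establish the promised membership $\col(\theta,\lambda)\in\mathbb C_\lambda$: as the forcing is $({<}\theta)$-closed, every proper initial segment of the generic cofinal sequence of order-type $\theta$ lies in $V$, and a one-step density argument shows that the generic club $\Lambda$ escapes each $f\in{}^\lambda\lambda\cap V$ in the sense of Definition~\ref{theclass}(2) --- given such an $f$ and a condition that has decided $\Lambda$ up to some point, extend it so that the next element of $\Lambda$ is thrown above $f$ of the current one. Feeding this into Lemma~\ref{l2} produces the club $\Lambda\s\lambda$ of order-type $\theta$ escaping ground-model functions cofinally.

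The decisive payoff of $({<}\theta)$-closure is that it renders the coherence bookkeeping automatic, which is what lets the arithmetic hypothesis be relaxed from $\lambda^{<\lambda}=\lambda$ to $\lambda^{<\theta}=\lambda$. I would arrange that every $C_\alpha$ (for $\alpha\in T$) is a club in $\alpha$ of order-type $\le\theta$. Then for each $\delta<\alpha$ the initial segment $C_\alpha\cap\delta$ has order-type $<\theta$, so by $({<}\theta)$-closure it already belongs to $V$; since $V\models|[\delta]^{<\theta}|=\lambda^{<\theta}=\lambda<\kappa$, the bound $|\{C_\alpha\cap\delta\mid\alpha\in T\}|<\kappa$ is immediate. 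This replaces the delicate Claim~\ref{c8103} and dispenses entirely with the coherent injections $\langle\varrho_\alpha\rangle$ that Theorem~\ref{thm46} drew from a special $\lambda^+$-Aronszajn tree. The first bullet of Clause~(1) is equally free: if $\bar\alpha\in\acc(C_\alpha)$ then $\bar\alpha\notin T$ (the scaffold avoids $T$), and $C_\alpha\cap\bar\alpha$ is a club in $\bar\alpha$ of order-type $<\theta<\lambda$ lying in $V$, so it is one of the $V$-clubs placed into $\mathcal C_{\bar\alpha}$ at points of $\acc(\kappa)\setminus T$.

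It remains to construct, for $\alpha\in T$, a club $C_\alpha$ of order-type $\le\theta$ guessing every cofinal $A\s\kappa$ of $V[G]$ in the $\suc_\sigma$-sense for all $\sigma<\theta$. As in Theorem~\ref{thm46} I would first reduce, via Claim~\ref{c743}, to guessing ground-model cofinal sets $X$ (passing from $A$ to a $V$-stationary $X_A$ with $Z_\beta=A\cap\beta$ cofinal on it), and at the end transfer through the map $g_\alpha$ of Claim~\ref{c335}. The real change is in the blocks out of which $C_\alpha$ is assembled: in Theorem~\ref{thm46} a block $\overline x$ may have order-type as large as $\lambda$ (the $\theta$-fold sum still being $\le\theta\cdot\lambda=\lambda$, matching $\p^*(T,\lambda)$), whereas now each block must have order-type $<\theta$ so that the $\theta$-indexed sum is $\le\theta$. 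The device I would use is that $|\alpha|=\theta$ in $V[G]$: fixing a bijection $\alpha\to\theta$ makes every relevant witness $\beta\in\acc^+(X)\cap T$ together with its least code $j$ accessible at a $({<}\theta)$-bounded stage, so that a block of size $<\theta$ suffices to house the pair and to let the gadget $h_{x,\beta}$ insert a run $\suc_{\otp(x)}(X^j\setminus\epsilon)\s X$; distributing the run-lengths cofinally across the $\theta$ blocks then secures the required $\suc_\sigma$-runs for every $\sigma<\theta$. Since each such block is a $({<}\theta)$-sequence it is computed in $V$, so the verifications corresponding to Claims~\ref{c742} and~\ref{c335} go through.

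I expect the main obstacle to be exactly this last reconciliation. In Theorem~\ref{thm46} the blocks were made large on purpose, so as to contain the $X$-dependent witness $\beta$ and its code $j$ inside a single interval, with the one escape inequality $f_*(\xi)<\min(\Lambda\setminus(\xi+1))$ doing all the work at once; shrinking the blocks to order-type $<\theta$ while still capturing arbitrary $V$-cofinal $X$ --- whose witnesses are spread cofinally and carry unpredictable codes --- is the genuinely delicate point. What makes it feasible, and is unavailable in the setting of Theorem~\ref{thm46}, is the collapse-specific fact that $\alpha$ has size $\theta$ in $V[G]$, so that a single re-indexing $\alpha\to\theta$ simultaneously bounds all witnesses and their codes below $\theta$. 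Once the guessing is verified, the sequence $\langle\mathcal C_\delta\mid\delta<\kappa\rangle$ meets both clauses of $\p^*(T,\theta)$, and together with the $\diamondsuit(\kappa)$ already secured this completes the proof.
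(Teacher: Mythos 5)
Your peripheral components are fine and agree with the paper: $\diamondsuit(\kappa)$ via \cite{Sh:922} plus the nice-name argument of Claim~\ref{c743}, coherence handled by placing all clubs of size $<\theta$ (automatically ground-model sets, by $({<}\theta)$-closure) at levels of small cofinality, and the counting of $\{C_\alpha\cap\delta\mid\alpha\in T\}$ trivialized by the fact that initial segments of order-type $<\theta$ lie in $V$, where $|[\delta]^{<\theta}|\le\lambda<\kappa$ --- this is exactly why the paper needs no analogue of Claim~\ref{c8103} here. The transfer from guessing $V$-cofinal sets $X$ to guessing arbitrary $A\in V[G]$ via the map $g_\alpha$ and the $Z_\beta$'s is also as in the paper. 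But the core of your plan --- building $C_\alpha$ out of $\Lambda$-indexed blocks of order-type $<\theta$ using the $h_{x,\beta}$ gadgets and a bijection $e_\alpha:\alpha\to\theta$ taken in $V[G]$ --- has a genuine gap, and it sits precisely at the point you yourself flag as delicate. In Claim~\ref{c742} the engine is that the witness-locating functions $f_0,\dots,f_3,f_*$ are defined \emph{in $V$} (from $\varrho_\alpha$, $\pi_\alpha$, $f$, $X$, all ground-model objects), so that the escape property of $\Lambda$ from Lemma~\ref{l2} --- which by Definition~\ref{theclass}(2) applies only to functions in ${}^\lambda\lambda\cap V$ --- can be invoked against $f_*$ to align, inside a single window $[\pi_\alpha(\xi),\pi_\alpha(\zeta))$, the witness $\beta\in\acc^+(X)\cap T$ and its code $j$ with $\varrho_\alpha$-ranks below $\zeta$. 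Once you replace $\varrho_\alpha$ by a collapse-generic bijection $e_\alpha:\alpha\to\theta$, the corresponding locating functions live only in $V[G]$, and $\Lambda$ has no escape property against them; the fact that each individual block, having size $<\theta$, lies in $V$ is irrelevant to this, since what must lie in $V$ is the \emph{function} producing the witnesses, not the blocks it outputs. ``Distributing the run-lengths cofinally across the $\theta$ blocks'' is asserted, not proved, and no mechanism in your sketch forces even one block to contain a $\beta$ together with its code $j$ at $e_\alpha$-rank below the block's index.

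The paper's actual proof avoids this machinery entirely, and the repair is instructive: instead of extracting $C_\alpha$ from $\Lambda$ by a ground-model template, it reads $C_\alpha$ directly off the generic object. Fixing in $V$ surjections $f_\alpha:\lambda\to\alpha$, one sets $g_\alpha:=f_\alpha\circ g$ where $g=\bigcup G$ is the generic (shuffling) surjection $\theta\to\lambda$, and lets $C_\alpha$ be the range of the ``record-setting'' subsequence $g_\alpha\circ h_\alpha$, a club of order-type exactly $\theta$. The guessing of every $V$-cofinal $X$ (Claim~\ref{c341}) is then a one-step density argument over $\col(\theta,\lambda)$: any condition $p$ determines a bounded portion of the records with supremum $\beta<\alpha$, and one extends $p$ so that the next values of $g_\alpha$ enumerate the ordinal closure of $\{\beta,\varepsilon\}\cup\suc_\iota(X\setminus\epsilon)$, forcing a $\suc_\sigma$-run into $X$ above any prescribed $\varepsilon$. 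In other words, genericity of the collapse \emph{replaces} the $\Lambda$-escape mechanism rather than feeding into it --- which is why the hypothesis can drop from $\lambda^{<\lambda}=\lambda$ to $\lambda^{<\theta}=\lambda$ and the order-type from $\lambda$ to $\theta$ --- and the membership $\col(\theta,\lambda)\in\mathbb C_\lambda$ falls out as a byproduct (take $\Lambda:=C_\lambda$), rather than being an input as in your plan. Your one-step density idea for $\mathbb C_\lambda$-membership is essentially this observation; had you applied the same density reasoning to every $C_\alpha$, $\alpha\in T$, instead of routing through Lemma~\ref{l2} and the block calculus of Theorem~\ref{thm46}, the proof would close.
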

\begin{proof} Work in $V$. For every nonzero $\alpha<\lambda^+$, fix a surjection $f_\alpha:\lambda\rightarrow\alpha$.
Work in $V[G]$, where $G$ is $\col(\theta,\lambda)$-generic.
Put $g:=\bigcup G$. By genericity,  $g:\theta\rightarrow\lambda$ is a \emph{shuffling surjection}, that is, it satisfies $|g^{-1}\{\eta\}|=\theta$ for every $\eta<\lambda$.

Note that as $\col(\theta,\lambda)$ is $({<\theta})$-closed, for every $\alpha\in T$, we have $\theta\le\cf(\alpha)\le|\alpha|=|\lambda|=\theta$.
That is, $T\s E^{\kappa}_\theta$.
Fix $\alpha\in T$.
Put $g_\alpha:=f_\alpha\circ g$.
As $g$ is a shuffling surjection, so is $g_\alpha : \theta \to \alpha$.  Then, since $\cf(\alpha)=\theta$, we may define a strictly increasing function $h_\alpha:\theta\rightarrow\theta$ by recursion:
\begin{itemize}
\item $h_\alpha(0):=0$;
\item  $h_\alpha(i+1):=\min\{ k\in(h_\alpha(i),\theta)\mid g_\alpha(k)>g_\alpha(h_\alpha(i))\}$;
\item for $i\in\acc(\theta)$, $h_\alpha(i):=\min\{k\in(\sup(\rng(h_\alpha\restriction i)),\theta)\mid g_\alpha(k)=\sup(\rng(g_\alpha\circ(h_\alpha\restriction i)))\}$.
\end{itemize}

Clearly, $C_\alpha:=\rng(g_\alpha\circ h_\alpha)$ is a subset of $\alpha$ of order-type $\theta$ satisfying
$\acc^+(C_\alpha) \subseteq C_\alpha$. It follows from the next claim (taking $X:=\kappa$) that $C_\alpha$ is moreover a club in $\alpha$.

\begin{claim}\label{c341} For every cofinal subset $X\s\kappa$ from $V$, every $\alpha\in T\cap\acc^+(X)$,
and every $\sigma<\theta$,
we have $\sup\{\epsilon\in C_\alpha\mid \suc_\sigma(C_\alpha\setminus\epsilon)\s  X\}=\alpha$.\footnote{Let us point out that $C_\lambda$
witnesses the crucial reason for $\col(\theta,\lambda)\in\mathbb C_\lambda$.
That is, by taking $\Lambda:=C_\lambda$, we get that $\Lambda$ is a club in $\lambda$ of order-type $\cf(\lambda)$ such that
for every function $f\in{}^\lambda\lambda\cap V$, there exists some $\xi\in\Lambda$ with $f(\xi)<\min(\Lambda\setminus(\xi+1))$.
Indeed, given $f\in{}^\lambda\lambda\cap V$, simply pick $X\s\kappa$ from $V$ such that $X\setminus\lambda=\kappa\setminus\lambda$,
and such that $X\cap\lambda$ is a cofinal subset of $\lambda$ satisfying that for all $\xi\in X\cap\lambda$, $f(\xi)<\min(X\setminus(\xi+1))$. Now,
appeal to our claim with this $X$ and with $\sigma=3$.}
\end{claim}
\begin{proof} Work in $V$. Let $X,\alpha$ and $\sigma$ be as in the hypothesis.
Let $\varepsilon<\alpha$ be arbitrary.

To run a density argument, let us fix an arbitrary condition $p\in\col(\theta,\lambda)$.
By extending $p$, we may assume that $\bar\theta:=\dom(p)$ is a nonzero ordinal.
Put $p_\alpha:=f_\alpha\circ p$, and let $\chi$ be the largest ordinal
for which there exists a strictly increasing function $h:\chi\rightarrow\bar\theta$ satisfying:
\begin{itemize}
\item $h(0)=0$;
\item  $h(i+1)=\min\{ k\in(h(i),\bar\theta)\mid p_\alpha(k)>p_\alpha(h(i))\}$;
\item for $i\in\acc(\chi)$, $h(i)=\min\{k\in(\sup(\rng(h\restriction i)),\bar\theta)\mid p_\alpha(k)=\sup(\rng(p_\alpha\circ(h\restriction i)))\}$.
\end{itemize}

Let $\beta:=\sup(\rng(p_\alpha\circ h))$, and $\epsilon:=\max\{\beta,\varepsilon\}$. As $\left|\rng(p_\alpha)\right|\le|\bar\theta|<\theta=\cf(\alpha)$, we know that $\epsilon<\alpha$.
For all $\iota<\theta$, denote $x_\iota:=\{\beta,\varepsilon\}\cup\suc_\iota(X\setminus\epsilon)$.

Pick a large enough $\iota<\theta$ such that $\otp(\suc_\sigma(x_\iota\setminus\epsilon))=\sigma$,
and let $\overline{x}$ denote the ordinal closure of $x_\iota$.
By $\iota<\theta$ and $\alpha\in\acc^+(X)\cap T$,  we have $\overline{x}\s \rng(f_\alpha)$.
Pick a condition $q$ extending $p$, such that $\langle q(k)\mid \bar\theta\le k<\bar\theta+\otp(\overline{x})\rangle$ forms the increasing enumeration of $\overline{x}$.
Then $q$ forces that $\epsilon\in C_\alpha\setminus\varepsilon$ and $\suc_\sigma(C_\alpha\setminus\epsilon)\s X$.
\end{proof}

In $V$, by $\ch_\lambda$ and the main result of \cite{Sh:922}, $\diamondsuit(\kappa)$ holds.
By $\lambda^{<\theta}=\lambda<\kappa$, $\col(\theta,\lambda)$ is a \textit{$\kappa$-cc} poset of size $<\kappa$,
so that the same argument from the proof of Theorem~\ref{thm46} yields that in $V[G]$, there exists a sequence $\langle Z_\alpha\mid\alpha<\kappa\rangle$
satisfying that for every $A\in\mathcal P^{V[G]}(\kappa)$, there exists some $X_A\in\mathcal P^{V}(\kappa)$ such that:
\begin{enumerate}
\item $V\models X_A\text{ is stationary}$;
\item $V[G]\models X_A\s \{\beta<\lambda^+\mid Z_\beta=A\cap\beta\}$.
\end{enumerate}

As $(\theta^+)^{V[G]}=(\lambda^+)^V=\kappa$,
and as $\col(\theta,\lambda)$ is a \textit{$\kappa$-cc} poset, stationary subsets of $\kappa$ from $V$ remain stationary in $V[G]$,
so that $V[G]\models\diamondsuit(\kappa)$.

Work in $V[G]$.
For every $\alpha\in T$, let $C_\alpha^\bullet:=\rng(g_\alpha)$, where
$g_\alpha:C_\alpha\rightarrow\alpha$ is defined exactly as in the proof of Theorem~\ref{thm46} (page~\pageref{def_of_g}).
Then $C_\alpha^\bullet$ is again a club in $\alpha$ of order-type $\theta$.
\begin{claim} For every cofinal subset $A\s\kappa$ (from $V[G]$),
there exists a club $E_A\s\kappa$, such that for every $\alpha\in T\cap E_A$ and every $\sigma<\theta$, we have $\sup\{\eta\in C^\bullet_\alpha\mid \suc_\sigma(C^\bullet_\alpha\setminus\eta)\s A\}=\alpha$.
\end{claim}
\begin{proof} The result follows from Claim~\ref{c341} just like Claim~\ref{c335} follows from Claim~\ref{c742}.
\end{proof}

For every $\alpha\in E^{\kappa}_\theta\setminus T$, let $C_\alpha^\bullet$ be an arbitrary club subset of $\alpha$ of order-type $\theta$.

Finally, for every $\alpha<\kappa$, let:
$$\mathcal C_\alpha^\bullet:=\begin{cases}\{\emptyset\},&\text{if }\alpha=0;\\
\{\{\beta\}\},&\text{if }\alpha=\beta+1;\\
\{C_\alpha^\bullet\},&\text{if }\alpha\in E^{\kappa}_\theta;\\
\{ c\in[\alpha]^{<\theta}\mid c\text{ is a club in }\alpha\},&\text{otherwise}.\end{cases}$$

As $V\models \lambda^{<\theta}=\lambda$, and $\col(\theta,\lambda)$ is $({<\theta})$-closed, we have $|[\lambda]^{<\theta}|\le|\lambda|=\theta$,
so that $|\mathcal C_\alpha^\bullet|<\kappa$ for all $\alpha\in E^{\kappa}_{<\theta}$.
Altogether, $\langle \mathcal C_\alpha^\bullet\mid\alpha<\kappa\rangle$ witnesses $\p^*(T,\theta)$.
\end{proof}

\begin{prop} Suppose that $\lambda^{<\lambda}=\lambda$ is an infinite cardinal,
and $\kappa>\lambda$ is a strongly inaccessible cardinal.
If $\mathbb P$ is a $({<\lambda})$-distributive, $\kappa$-cc notion of forcing,
collapsing $\kappa$ to $\lambda^+$,
then $V^{\mathbb P}\models\p^*(E^{\lambda^+}_\lambda,\lambda)+\diamondsuit(\lambda^+)$.
\end{prop}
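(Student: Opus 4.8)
The plan is to transplant the two-layer construction of Theorem~\ref{thm46} and Proposition~\ref{p34} to the setting where the former inaccessible $\kappa$ plays the role previously played by $\lambda^+$. First I would record the preservation facts: since $\lambda^{<\lambda}=\lambda$ makes $\lambda$ regular and $\mathbb P$ is $({<\lambda})$-distributive, $\lambda$ stays regular in $V[G]$ (singularizing it would add a short cofinal sequence), while $\kappa$-cc keeps $\kappa$ regular; as $\mathbb P$ collapses $\kappa$ to $\lambda^+$ we get $\kappa=(\lambda^+)^{V[G]}$, so every limit $\delta<\kappa$ has $\cf^{V[G]}(\delta)\le\lambda$ and $T=E^\kappa_\lambda$ is automatically stationary. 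For $\diamondsuit(\kappa)$ I would, assuming $|\mathbb P|\le\kappa$ as the examples $\col(\lambda,{<\kappa})$ and $\mathbb S(\lambda,{<\kappa})$ satisfy, count nice names: $\kappa$-cc bounds each antichain below $\kappa$ and strong inaccessibility gives $\kappa^{<\kappa}=\kappa$, so there are only $\kappa^\lambda=\kappa$ nice names for subsets of $\lambda$, whence $V[G]\models 2^\lambda=\lambda^+$. For uncountable $\lambda$ this yields $\diamondsuit(\kappa)$ through \cite{Sh:922}; uniformly (and for $\lambda=\aleph_0$) I would instead invoke the classical fact that collapsing an inaccessible introduces a diamond at the image cardinal, realized by filtering the completion of $\mathbb P$ by complete subalgebras of size $<\kappa$ along a club and reading off each $A\cap\beta$ from the restricted generic. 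Either way I obtain, exactly as in Claim~\ref{c743}, the sequence $\langle\Omega_\beta\rangle$ and its interpretations $\langle Z_\beta\rangle$.

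The decisive simplification, as in Proposition~\ref{p34}, is that coherence comes for free. Any accumulation point $\bar\alpha$ of a club of order-type $\le\lambda$ has $\cf(\bar\alpha)<\lambda$, hence lies in $E^\kappa_{<\lambda}$ rather than in $T$. So I would set $\mathcal C^\bullet_\alpha:=\{C^\bullet_\alpha\}$ for $\alpha\in T$ (with $C^\bullet_\alpha$ a single club of order-type $\lambda$ to be constructed), $\mathcal C^\bullet_\delta:=\{c\in[\delta]^{<\lambda}\mid c\text{ is a club in }\delta\}$ for limit $\delta\in E^\kappa_{<\lambda}$, and the trivial values at $0$ and successors. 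Then Clause~(1) holds automatically: a restriction $C^\bullet_\alpha\cap\bar\alpha$ is a club in $\bar\alpha$ of order-type $<\lambda$, so by $({<\lambda})$-distributivity it already lies in $V$ and belongs to the bucket $\mathcal C^\bullet_{\bar\alpha}$; and $|\mathcal C^\bullet_\delta|\le|[\delta]^{<\lambda}\cap V|<\kappa$ by inaccessibility. Since no $T$-level is ever an accumulation point of any constructed club, the singletons impose no coherence obligation whatsoever, and I am reduced to producing, for each $\alpha\in T$, one club $C^\bullet_\alpha$ of order-type $\lambda$ realizing Clause~(2) for club-many $\alpha$.

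For Clause~(2) I would reuse the two-step mechanism of Theorem~\ref{thm46}. First build a scaffold club $C_\alpha\s\alpha$ of order-type $\lambda$ that \emph{weakly} guesses ground-model cofinal sets, i.e.\ satisfies the analogue of Claim~\ref{c742}: for every $V$-cofinal $X\s\kappa$, club-many $\alpha\in T$ enjoy $\sup\{\epsilon\in C_\alpha\mid\suc_\sigma(C_\alpha\setminus\epsilon)\s X\}=\alpha$ for all $\sigma<\lambda$. Then thin $C_\alpha$ to $C^\bullet_\alpha:=\rng(g_\alpha)$ through the very same $g_\alpha$ built from $\langle Z_\beta\rangle$ on page~\pageref{def_of_g}, and promote weak guessing to full guessing of an arbitrary $V[G]$-cofinal $A$ exactly as in Claim~\ref{c335}, feeding in the $V$-stationary set $X_A$ supplied by Claim~\ref{c743}. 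The scaffold itself I would extract from a surjection $\lambda\to\alpha$ that $\mathbb P$ adds (such a surjection exists since $|\alpha|^{V[G]}=\lambda$), via the record-value recursion of Proposition~\ref{p34}, and verify weak guessing by the density argument of Claim~\ref{c341}.

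The step I expect to be the main obstacle is exactly this density argument at full generality. Proposition~\ref{p34} exploited the explicit shuffling surjection of the L\'evy collapse, allowing any prescribed $V$-block $\suc_\sigma(X\setminus\epsilon)$ to be inserted into the next stretch of the generic sequence; here I must carry out the same block-insertion using only that $\mathbb P$ is $({<\lambda})$-distributive, $\kappa$-cc, and collapses $\kappa$ to $\lambda^+$, and without any assumption of cofinality-preservation. Establishing that an arbitrary such collapse adds, for club-many $\alpha$, a cofinal scaffold weaving $\sigma$-blocks through every ground-model cofinal set --- together with pinning down the uniform $\diamondsuit(\kappa)$ argument that also covers $\lambda=\aleph_0$ --- is where the genuine work concentrates; once the scaffold is in hand, the assembly into $\langle\mathcal C^\bullet_\delta\rangle$ and the verification of $\p^*(E^\kappa_\lambda,\lambda)+\diamondsuit(\kappa)$ follow the earlier proofs verbatim.
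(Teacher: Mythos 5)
There is a genuine gap, and you located it yourself: everything hinges on producing, for an arbitrary $({<\lambda})$-distributive, $\kappa$-cc collapse, scaffold clubs $C_\alpha$ of order-type $\lambda$ that weave $\sigma$-blocks of every ground-model cofinal set, and you leave this step open while earmarking the density argument of Claim~\ref{c341} for it. That argument cannot be transplanted: it exploits the concrete combinatorics of $\col(\theta,\lambda)$, where any condition $p$ may be end-extended so that the next stretch of the generic surjection enumerates a prescribed closed block $\overline{x}$. An abstract $\mathbb P$ in the present class offers no such steering of its generic surjections $\lambda\to\alpha$ --- nothing in $({<\lambda})$-distributivity plus $\kappa$-cc lets you insert a chosen block into $\rng(g_\alpha\circ h_\alpha)$ below a given condition --- so the ``record-value recursion'' applied to an arbitrary collapsing surjection has no reason to guess $V$-cofinal sets, and your proposal remains a program rather than a proof. (A secondary issue: your nice-name count for $\diamondsuit(\kappa)$ imports the hypothesis $|\mathbb P|\le\kappa$, which the statement does not grant, and your treatment of $\lambda=\aleph_0$ is only gestured at.)

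The paper's proof avoids any construction in the extension. Since $\kappa$ is strongly inaccessible, $\mathcal N_\alpha:=\{\tau\in V_{\alpha+1}\mid\tau\text{ a }\mathbb P\text{-name}\}$ has size $<\kappa$ for each $\alpha<\kappa$; setting $\mathcal A_\alpha:=\{\tau_G\mid\tau\in\mathcal N_\alpha\}\cap\mathcal P(\alpha)$, the $\kappa$-cc yields that $\langle\mathcal A_\alpha\mid\alpha<\kappa\rangle$ is a $\diamondsuit^+(\lambda^+)$-sequence in $V[G]$, whence $\diamondsuit^*(E^{\lambda^+}_\lambda)$ and $\diamondsuit(\lambda^+)$ hold --- uniformly in $\lambda$, so the $\lambda=\aleph_0$ case needs no separate device. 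Distributivity preserves $\lambda^{<\lambda}=\lambda$, and then a cited theorem of \cite{rinot23} --- that $\diamondsuit^*(E^{\lambda^+}_{\lambda})+\lambda^{<\lambda}=\lambda$ entails $\p^*(E^{\lambda^+}_\lambda,\lambda)$ --- finishes at once. In other words, the obstacle you isolated dissolves because collapsing an inaccessible gives far more than the $\diamondsuit(\kappa)$ you were aiming for: it gives $\diamondsuit^*$, and the guessing clubs you tried to force by hand come out of that implication as a black box. Your structural observations (coherence is vacuous since accumulation points of clubs of order-type $\le\lambda$ have cofinality $<\lambda$, and clubs of size $<\lambda$ lie in $V$ by distributivity, with $|[\delta]^{<\lambda}\cap V|<\kappa$ by inaccessibility) are correct and consistent with the paper's setting, but they only assemble the frame around the missing core.
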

\begin{proof} As $\kappa$ is strongly inaccessible,
for every $\alpha<\kappa$, the collection $\mathcal N_\alpha:=\{ \tau\in V_{\alpha+1}\mid \tau\text{ is a }\mathbb P\text{-name}\}$ has size $<\kappa$.
Let $G$ be $\mathbb P$-generic over $V$, and work in $V[G]$. For every $\mathbb P$-name $\tau$, denote by $\tau_G$ its interpretation by $G$.
Then, for all $\alpha<\kappa$, let $\mathcal A_\alpha:=\{ \tau_G\mid \tau\in\mathcal N_\alpha\}\cap\mathcal P(\alpha)$.
Since $\mathbb P$ has the \textit{$\kappa$-cc}, $\langle \mathcal A_\alpha\mid \alpha<\kappa\rangle$ forms a $\diamondsuit^+(\lambda^+)$-sequence,
and in particular, $\diamondsuit^*(E^{\lambda^+}_{\lambda})$ and $\diamondsuit(\lambda^+)$ hold.
Since $\mathbb P$ is $({<\lambda})$-distributive, we still have $\lambda^{<\lambda}=\lambda$.
Finally, by \cite{rinot23}, $\diamondsuit^*(E^{\lambda^+}_{\lambda})+\lambda^{<\lambda}=\lambda$ entails $\p^*(E^{\lambda^+}_\lambda,\lambda)$.
\end{proof}

\end{document}